\theoremstyle{plain}
\newtheorem{theorem}{\bf Theorem}[section]
\newtheorem{corollary}[theorem]{\bf Corollary}
\newtheorem{definition}[theorem]{\bf Definition}
\newtheorem{lemma}[theorem]{\bf Lemma}
\newtheorem{proposition}[theorem]{\bf Proposition}
\newtheorem{remark}[theorem]{Remark}
\numberwithin{equation}{section}
\title[q solitons] {Hamilton's identity and rigidity of complete gradient solitons}
\author[A.W. Cunha, A. N. Silva Jr. and Will Wylie]{Antonio W. Cunha$^{1}$; Antonio N. Silva Jr$^2$. and William Wylie$^3$}
\address{$^{1}$ Department of Mathematics, Universidade Federal do Piau\'{\i}, 64049-550 Teresina, Piau\'i, Brazil. Currently: Dept. of Math., Syracuse University, NY, 13244,  as a Scholar visitor.}
\address{$^{2}$ Universidade Estadual do Piauí, Campus Floriano, 64808-080, Floriano, Piauí, Brazil.}
\address{$^3$ Carnegie Building, Dept. of Math, Syracuse University, Syracuse, NY, 13244.}
\email{$^1$wilsoncunha@ufpi.edu.br}
\email{$^2$antoniojunior@frn.uespi.br}
\email{$^3$wwylie@syr.edu}
\subjclass[2010]{Primary 53C21, 53C25}
\keywords{Flatness and rigidity of $q$-solitons, volume growth estimate, Omori-Yau maximum principle}
\begin{document}

\begin{abstract}
In this work, we study gradient solitons to general geometric flows.  Our approach is to understand what assumptions need to be made about a flow in order to extend results about Ricci solitons.  In this direction,  we identify an identity, first exploited in the pioneering work of Richard  Hamilton in the case of Ricci solitons, which we call Hamilton's identity.  We show that a version of this identity for an arbitrary geometric flow allows one to recover results about rigidity, the growth of the potential function, volume growth and the Omori-Yau maximum principle that have been proven for gradient Ricci solitons. 
\end{abstract}

\maketitle

\section{Introduction}

\noindent

In recent years, many intrinsic geometric flows have been introduced. The most well studied is the Ricci flow, introduced by Hamilton \cite{Hamilton}, which was fundamental in solving the Poincaré Conjecture due to Perelman.  The  self-similar solutions to the Ricci flow are called Ricci solitons. These are Riemannian manifolds $(M, g)$ that satisfy the equation
$${\rm Ric}+ \frac{1}{2}\mathcal{L}_X g=\lambda g,$$
for some vector field $X$, where $\mathcal{L}_X g$ denotes the Lie derivative, ${\rm Ric}$ is the Ricci curvature of $M$ and $\lambda\in\mathbb{R}$. When $X=\nabla f$ for some function $f:M\to\mathbb{R}$ the soliton is called a \emph{gradient Ricci soliton}.  

Much work has been done on gradient Ricci solitons. In fact, in $3$-dimension it was proved that {\em any 3-dimensional complete noncompact non-flat shrinking gradient soliton is necessarily
the round cylinder $\mathbb{S}^2\times\mathbb{R}$ or one of its $\mathbb{Z}_2$ quotients} (see \cite{Cao-Chen-Zhuo},\cite{Wall}). 

In this paper we are interested in to what extent the theory of gradient Ricci solitons can be extended to other geometric flows.  For example, solitons for the Ricci Bourguignons flow, the ambient obstruction flow, and the $G_2$-closed Laplacian flow have received attention recently (\cite{Catino}, \cite{LY}, \cite{Lopez}). 

The study of Ricci solitons involves an interesting interplay of the geometry and curvature of the manifold and the function $f$ and its derivatives.  For gradient solitons to other flows, the function $f$ remains, but the Ricci tensor is replaced with another tensor, so in this paper we will focus on the potential function $f$.  

In order to formalize this idea,  Griffin \cite{Griffin} introduced the notion of a $q$-soliton.   Let $q$ be a $(0,2)$-tensor we call a flow of for form
\begin{equation}\label{q flow}
\left\{%
\begin{array}{ll}
\frac{\partial}{\partial t}g=q & \\
g(0)=h,
\end{array}%
\right.
\end{equation}
the {\emph{q -flow}}. Assuming scale invariance of $q$, the self-similar solutions  are characterized by solving the following equation
\begin{equation}\label{q soliton}
\frac{1}{2}\mathcal{L}_Xg=\lambda g+\frac{1}{2}q,
\end{equation}
where $\lambda$ is a constant. We call a solution to (\ref{q soliton}) a \emph{q-soliton}.  If $X=\nabla f$ for some smooth function $f$ on $M$, the $q$-soliton is said to be a {\em gradient $q$-soliton} and equation \eqref{q soliton} becomes
\begin{equation}\label{grad q soliton}
{\rm Hess} f=\lambda g+\frac{1}{2}q,
\end{equation}
where ${\rm Hess} f$ is the Hessian of  potential function $f$. If we  let $Q$ denote the $(1,1)$-tensor dual to $q$, we can also write the equation as
$$\nabla_X \nabla f = \lambda X + \frac{1}{2} Q(X).$$
We say that the $q$-soliton is {\em expanding, steady} and {\em shrinking} if $\lambda < 0$, $= 0$ or $>0$, respectively. It is said to be {\em stationary} if $X$ is a Killing vector field or the potential function $f$ is constant in the case of a gradient $q$-soliton. A $q$-soliton is said to be {\em $q$-flat} if $q = 0$. We will call a $q$-soliton {\em trivial} if $M$ is stationary and $q$-flat.

Griffin \cite{Griffin} proved that for compact $q$-solitons if $q$ is  divergence-free and trace-free  then any compact $q$-soliton is $q$-flat. This generalizes a previous result for Ricci solitons that any compact Ricci soliton with constant scalar curvature is Einstein (see \cite{WP}). More recently, the first author, together with Griffin \cite{Cunha and Griffin}, considered the complete case and obtained several $q$-flatness results by assuming restrictions on Ricci curvature, parabolicity, and either $L^\infty$ or $L^p$-regularity in the potential function or the vector field in the non-gradient case.

In this paper we are interested in other assumptions about $q$ that may be needed to generalize results about Ricci solitons to $q$-solitons.   One of the most well-known and important identities for gradient Ricci solitons is the identity due to Hamilton \cite{Hamilton} that the quantity  
$$ R +  |\nabla f|^2 - 2\lambda f$$ is constant, where $R$ denotes the scalar curvature. This turns out to follow from the tensor equation, ${\rm Ric}(\nabla f,\cdot)=\frac{1}{2}d R( \cdot) $, which is a consequence of the contracted second Bianchi identity and the Ricci soliton equation.  To generalize this to other flows, we will say that a gradient  $q$-soliton satisfies a Hamilton type identity if 
 \begin{equation}\label{equality assumption}
     Q(\nabla f) =\frac{1}{2}\nabla  \operatorname{tr}(q).
 \end{equation}   We will see below that this tensor equation is equivalent to the quantity
 
$$|\nabla f|^2 - \frac{1}{2}\operatorname{tr}(q)-2\lambda f
$$ being constant.  Which, tracing the $q$-soliton equation, we can also write in terms of an equation only involving $f$,  

$$ \Delta f + |\nabla f|^2 - 2 \lambda f = \text{constant}  $$(see Proposition \ref{Hamilton identity}).

 We are interested in what features gradient solitons that satisfy a Hamilton identity share with gradient Ricci solitons.  Our first result is the well-known result of Hamilton that non-stationary  compact gradient Ricci solitons must be shrinking.

 \begin{theorem}\label{thm01} If $(M,g,f)$ is a compact gradient $q$-soliton  with $\lambda \leq 0$ that satisfies (\ref{equality assumption}) then $f$ is constant. 
 \end{theorem}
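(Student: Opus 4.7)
The plan is to apply the maximum principle on the compact manifold $M$ to the auxiliary function $e^{-f}$. Tracing the gradient $q$-soliton equation \eqref{grad q soliton} yields
\[
\Delta f = n\lambda + \tfrac{1}{2}\operatorname{tr}(q),
\]
and substituting into the Hamilton identity $|\nabla f|^2 - \tfrac{1}{2}\operatorname{tr}(q) - 2\lambda f = C$ recalled from the introduction eliminates $\operatorname{tr}(q)$ and produces the scalar identity
\[
|\nabla f|^2 - \Delta f = A + 2\lambda f,
\]
where $A = C - n\lambda$ is a real constant.

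A direct computation then gives $\Delta(e^{-f}) = e^{-f}\bigl(|\nabla f|^2 - \Delta f\bigr) = e^{-f}(A + 2\lambda f)$. Since $M$ is compact, $e^{-f}$ attains its maximum at a minimum point of $f$ and its minimum at a maximum point of $f$. Evaluating $\Delta(e^{-f})$ at these extrema, via the usual weak max/min principle, gives
\[
A + 2\lambda f_{\min} \leq 0 \leq A + 2\lambda f_{\max},
\]
and subtracting produces $2\lambda(f_{\max} - f_{\min}) \geq 0$.

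When $\lambda < 0$ this forces $f_{\max} \leq f_{\min}$, so $f$ must be constant. When $\lambda = 0$ the displayed inequalities collapse to $A \leq 0 \leq A$, hence $A=0$ and $\Delta(e^{-f}) \equiv 0$; on a compact manifold a harmonic function is constant, so $f$ is again constant.

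I expect the only subtle point to be the choice of sign in the exponent: using $e^{f}$ in place of $e^{-f}$ produces inequalities that run the wrong way under the assumption $\lambda \leq 0$ and yield no information. The sign mismatch between $\lambda$ and the exponent is precisely what makes the max-principle comparison cut in the useful direction.
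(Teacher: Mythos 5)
Your proof is correct and follows essentially the same route as the paper: trace the soliton equation, combine with Hamilton's identity to obtain the scalar relation $|\nabla f|^2-\Delta f = A+2\lambda f$, and then invoke compactness. The paper states this identity and simply asserts the conclusion, whereas you carry out the maximum-principle step explicitly via the auxiliary function $e^{-f}$ (which cleanly handles both $\lambda<0$ and the harmonic case $\lambda=0$); this is a standard and valid way to fill in the detail the paper leaves implicit.
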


The third author and Petersen \cite{WP} have also proved that for a non-trivial compact shrinking gradient Ricci soliton  
$$\int_M{\rm Ric}(\nabla f,\nabla f)>0.$$

 Our next result is a generalization of this result  for gradient $q$-solitons satisfying \eqref{equality assumption}.
 
\begin{theorem}\label{thm03}
Any compact non-stationary gradient $q$-soliton satisfying \eqref{equality assumption} must satisfy $$\int_M  \mathrm{div}(q)(\nabla f)< 0.$$
\end{theorem}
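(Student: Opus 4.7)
The plan is to combine the divergence of the soliton equation with Hamilton's identity and the Bochner formula, reducing the integrand on the nose to a (distributional) perfect square. Specifically, I would aim to derive the pointwise identity
\[
\tfrac{1}{2}\mathrm{div}(q)(\nabla f) \;=\; \tfrac{1}{2}\Delta|\nabla f|^2 \;-\; |\mathrm{Hess}\,f|^2,
\]
from which the theorem follows by integration on the closed manifold.

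First, tracing \eqref{grad q soliton} yields $\Delta f = n\lambda + \tfrac{1}{2}\operatorname{tr}(q)$; taking gradients and applying Hamilton's identity \eqref{equality assumption} gives $\nabla \Delta f = Q(\nabla f)$, and hence $\langle \nabla\Delta f,\nabla f\rangle = q(\nabla f,\nabla f)$. Next, taking the divergence of \eqref{grad q soliton} as a $(0,2)$-tensor, since $\mathrm{div}(g)=0$, one has $\tfrac{1}{2}\mathrm{div}(q) = \mathrm{div}(\mathrm{Hess}\,f)$. Plugging this into the standard commutation identity $\mathrm{div}(\mathrm{Hess}\,f)(X) = \langle\nabla\Delta f, X\rangle + \mathrm{Ric}(\nabla f, X)$ at $X=\nabla f$, together with the previous step, gives
\[
\tfrac{1}{2}\mathrm{div}(q)(\nabla f) \;=\; q(\nabla f,\nabla f) + \mathrm{Ric}(\nabla f,\nabla f).
\]

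To eliminate the Ricci term I would apply the Bochner formula $\tfrac{1}{2}\Delta|\nabla f|^2 = |\mathrm{Hess}\,f|^2 + \langle\nabla\Delta f,\nabla f\rangle + \mathrm{Ric}(\nabla f,\nabla f)$, use $\langle\nabla\Delta f,\nabla f\rangle = q(\nabla f,\nabla f)$ once more, and substitute; the two $q(\nabla f,\nabla f)$ contributions cancel, leaving the displayed pointwise identity. Integrating on the closed $M$, the term $\int_M \Delta|\nabla f|^2\,dV$ vanishes, so
\[
\int_M \mathrm{div}(q)(\nabla f)\,dV \;=\; -2\int_M |\mathrm{Hess}\,f|^2\,dV \;\leq\; 0.
\]

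Equality would force $\mathrm{Hess}\,f \equiv 0$; then $\nabla f$ would be parallel, and since it must vanish at any interior maximum of $f$ on the compact connected manifold $M$, it would vanish identically, making $f$ constant and contradicting non-stationarity. Therefore the inequality is strict. The main obstacle is really just careful bookkeeping — keeping the signs straight in the divergence-of-Hessian identity and the Bochner formula, and verifying that the two $q(\nabla f,\nabla f)$ terms cancel so that the perfect square $|\mathrm{Hess}\,f|^2$ is the sole surviving object.
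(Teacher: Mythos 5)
Your proof is correct and follows essentially the same route as the paper: both combine the divergence of the soliton equation with the commutation identity for $\mathrm{div}(\mathrm{Hess}\,f)$, the Bochner formula, and Hamilton's identity to isolate $|\mathrm{Hess}\,f|^2$, then integrate over the closed manifold (the paper integrates $\Delta\operatorname{tr}(q)$ to zero where you integrate $\Delta|\nabla f|^2$, which differ only by $\lambda\Delta f$ via Hamilton's identity). Your final paragraph also correctly supplies the detail, left implicit in the paper, that $\mathrm{Hess}\,f\equiv 0$ on a compact manifold forces $f$ to be constant.
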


 Our next set of results are generalizations of results about complete non-compact gradient Ricci solitons with constant scalar curvature.  In the Ricci soliton case, constant scalar curvature is equivalent to $q$ having constant trace and, by the Bianchi identity, $q$ being divergence free.

\begin{theorem}\label{thm02}
Let $(M,g,f)$ be a complete gradient $q$-soliton satisfying \eqref{equality assumption}. If one of the following holds:
\begin{enumerate}
    \item [i)] either $q$ is trace free and divergence free, or
    \item [ii)] $q$ is trace free and non-negative Ricci curvature, or
    \item [iii)] $\lambda=0$, $q$ is constant trace and non-negative Ricci curvature, or
    \item[iv)] $\lambda<0$, $div(q)=0$, non-positive trace and either $M$ is parabolic or non-negative Ricci curvature,
\end{enumerate}
then $M$ must be $q$-flat.
\end{theorem}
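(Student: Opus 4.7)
The plan is to derive, under Hamilton's identity \eqref{equality assumption}, a scalar Bochner-type identity
\begin{equation}
\Delta\operatorname{tr}(q) \;=\; 2\lambda\operatorname{tr}(q) + |q|^2 + 4\,q(\nabla f,\nabla f) + 4\,{\rm Ric}(\nabla f,\nabla f), \tag{$\ast$}
\end{equation}
together with a Bianchi-type identity
\begin{equation}
{\rm Ric}(\nabla f, X) \;=\; \tfrac{1}{2}{\rm div}(q)(X) - q(\nabla f, X), \tag{$\ast\ast$}
\end{equation}
and then to read off each of the four cases. Substituting $(\ast\ast)$ at $X=\nabla f$ into $(\ast)$ cancels the cross terms and produces the clean master equation
\begin{equation}
\Delta\operatorname{tr}(q) \;=\; 2\lambda\operatorname{tr}(q) + |q|^2 + 2\,{\rm div}(q)(\nabla f). \tag{$\star$}
\end{equation}

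Identity $(\ast)$ will come from the Bochner formula applied to $|\nabla f|^2$: inserting ${\rm Hess}\, f = \lambda g + \tfrac12 q$ yields $|{\rm Hess}\, f|^2 = n\lambda^2 + \lambda\operatorname{tr}(q) + \tfrac14|q|^2$ and $\Delta f = n\lambda + \tfrac12\operatorname{tr}(q)$, while Hamilton's identity in the form $|\nabla f|^2 = \tfrac12\operatorname{tr}(q) + 2\lambda f + C$ computes $\Delta|\nabla f|^2$ directly; equating the two expressions produces $(\ast)$. Identity $(\ast\ast)$ follows from the classical commutator ${\rm div}({\rm Hess}\, f)(X) = X(\Delta f) + {\rm Ric}(\nabla f, X)$, combined with ${\rm div}({\rm Hess}\, f) = \tfrac12{\rm div}(q)$ from the soliton equation and $X(\Delta f) = \tfrac12 X(\operatorname{tr}(q)) = q(\nabla f, X)$ from Hamilton's identity.

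Cases (i) and (iv) then follow immediately from $(\star)$. In (i), ${\rm div}(q) = 0$ and $\operatorname{tr}(q) = 0$ reduce $(\star)$ to $0 = |q|^2$, so $q \equiv 0$. In (iv), ${\rm div}(q) = 0$ gives $\Delta\operatorname{tr}(q) = 2\lambda\operatorname{tr}(q) + |q|^2 \geq 0$, so $\operatorname{tr}(q)$ is subharmonic and bounded above by $0$; under the parabolicity hypothesis $\operatorname{tr}(q)$ must therefore be constant, whence $2\lambda\operatorname{tr}(q) + |q|^2 = 0$ and both nonnegative summands must vanish individually. For cases (ii) and (iii) the simplified form $(\star)$ is not available (no hypothesis on ${\rm div}(q)$), so one works with $(\ast)$ directly: Hamilton's identity together with the constancy of $\operatorname{tr}(q)$ forces $q(\nabla f,\nabla f) = \tfrac12\langle \nabla\operatorname{tr}(q),\nabla f\rangle = 0$ while $\Delta\operatorname{tr}(q) - 2\lambda\operatorname{tr}(q)$ vanishes (either because $\operatorname{tr}(q) = 0$ in (ii), or because it is constant with $\lambda = 0$ in (iii)), and $(\ast)$ collapses to $0 = |q|^2 + 4\,{\rm Ric}(\nabla f, \nabla f)$; ${\rm Ric} \geq 0$ then forces both terms to vanish.

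The main obstacle is case (iv) under the alternative ${\rm Ric} \geq 0$ hypothesis, since Ricci nonnegativity does not automatically imply classical parabolicity in dimension $\geq 3$. The plan here is to exploit the polynomial volume growth from Bishop--Gromov together with the Omori--Yau maximum principle, which is available whenever Ricci is bounded below: applied to the subharmonic function $\operatorname{tr}(q) \leq 0$ it produces a sequence realizing $\sup\operatorname{tr}(q)$ with $|\nabla\operatorname{tr}(q)| \to 0$ and $\Delta\operatorname{tr}(q) \leq 1/k$, and combining with $(\star)$ forces $\sup\operatorname{tr}(q) = 0$ and $|q|\to 0$ along the sequence. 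To promote this to the global conclusion $\operatorname{tr}(q) \equiv 0$ one either invokes the strong maximum principle at the supremum (if attained) or runs a Karp-type $L^p$-Liouville argument, using the upper bound on $f$ coming from Hamilton's identity $|\nabla f|^2 = \tfrac12\operatorname{tr}(q) + 2\lambda f + C$ to control $|\nabla f|$ at the required scales; once $\operatorname{tr}(q) \equiv 0$ the algebraic argument above concludes $q \equiv 0$.
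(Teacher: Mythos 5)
Your identities $(\ast)$, $(\ast\ast)$ and $(\star)$ are precisely the content of Lemma \ref{Laplacian of the trace} and equation \eqref{Ric and div} in the paper, derived the same way (Bochner applied to $|\nabla f|^2$ plus the commutation formula for $\mathrm{div}(\mathrm{Hess}\,f)$, with \eqref{equality assumption} converting $\nabla \Delta f$ into $q(\nabla f,\cdot)$), and your treatment of cases (i), (ii), (iii) and the parabolic branch of (iv) coincides with the paper's proof. The issue is the $\mathrm{Ric}\geq 0$ branch of case (iv), which you correctly identify as the main obstacle but do not actually close.

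Concretely: the Omori--Yau sequence applied to $\operatorname{tr}(q)\leq 0$ gives $\sup_M \operatorname{tr}(q)=0$ and $|q|(x_k)\to 0$ along the sequence, but this is strictly weaker than $\operatorname{tr}(q)\equiv 0$. Your first proposed completion (strong maximum principle) only works if the supremum is attained, which nothing guarantees; the supremum $0$ could be approached only at infinity. Your second proposed completion (a Karp-type $L^p$-Liouville theorem) does not apply as stated: Karp's theorem requires a \emph{nonnegative} subharmonic function with $\int_{B_r}u^p=o(r^2)$, whereas $\operatorname{tr}(q)$ is nonpositive and not known to be bounded below, and even after adding a constant the Bishop--Gromov volume bound only gives $O(r^n)$, not $o(r^2)$. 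Note that nonnegative Ricci curvature alone does not rule out bounded nonconstant subharmonic functions (e.g.\ a smoothing of $-\min(1,|x|^{-1})$ on $\mathbb{R}^3$), so subharmonicity plus the upper bound $\operatorname{tr}(q)\leq 0$ genuinely cannot finish the argument; one must use the stronger inequality $\Delta\operatorname{tr}(q)\geq 2\lambda\operatorname{tr}(q)\geq 0$ with $\lambda<0$.

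The paper closes this case differently: it takes Laplacian cut-off functions $\phi_r$ as in \eqref{prob 0}, whose existence under $\mathrm{Ric}\geq 0$ is guaranteed by \cite{Batu}, and integrates by parts to squeeze
\begin{equation*}
0\leq \int_{B_{2r}} 2\lambda \operatorname{tr}(q)\,\phi_r \leq \int_{B_{2r}} \operatorname{tr}(q)\,\Delta\phi_r ,
\end{equation*}
with the right-hand side controlled by $C r^{-2}\int_{B_{2r}\setminus B_r}|\operatorname{tr}(q)|$ on the annulus, forcing $\lambda\operatorname{tr}(q)\equiv 0$ and hence $\operatorname{tr}(q)\equiv 0$, after which $|q|^2=\Delta\operatorname{tr}(q)-2\lambda\operatorname{tr}(q)=0$. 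An alternative rigorous completion of your own route would be to observe that $u=-\operatorname{tr}(q)\geq 0$ satisfies $\Delta u\leq 2\lambda u=-2|\lambda|u$, so if $u>0$ everywhere the Allegretto--Piepenbrink/Fischer-Colbrie--Schoen criterion would force $\lambda_1(-\Delta)\geq 2|\lambda|>0$, contradicting $\lambda_1=0$ on a complete manifold of nonnegative Ricci curvature; hence $u$ vanishes somewhere and the strong minimum principle gives $u\equiv 0$. Either way, the step as written in your proposal is a genuine gap.
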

\begin{remark}
{\em 
We note that, at least in case i), the assumption \eqref{equality assumption} is necessary for $q$-flatness.  For example, Ho \cite{Ho} shows that the product metric $\mathbb{R}^2 \times N^2$ where $N$ is a surface of constant curvature is a shrinking gradient Bach soliton and Griffin constructs a homogeneous complete expanding gradient Bach soliton on $\mathbb{R} \times SU(2)$.   These examples have $q$ trace free and divergence free, but are  not $q$-flat.  This also shows that gradient Bach solitons do not, in general, satisfy (\ref{equality assumption}). }
\end{remark}

To state the next result, we say that a gradient $q$-soliton $M$ is {\em rigid} if its universal cover is isometric to a product metric   $M=N\times \mathbb{R}^k$,  and $f$ is a quadratic function on the $\mathbb{R}^k$ factor. (See Definition \ref{rigid-defn} for the precise definition). 

In \cite[Theorem 1.2]{WP} the third author and Petersen proved that a gradient Ricci soliton is rigid if and only if it has constant scalar curvature and is {\em radially flat}, that is,
$${\rm sec}(X,\nabla f)=0.$$
We obtain the following version of this result for $q$-solitons.

\begin{theorem}\label{thm04}
Let $(M,g,f)$ be a complete gradient $q$-soliton.  $(M,g,f)$ is rigid if and only if  $\mathrm{tr}(q)$ is constant, ${\rm sec}(X,\nabla f)=0$,  and there is a constant $c$ such that $Q(\nabla f) = c \nabla f$. 
\end{theorem}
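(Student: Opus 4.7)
Suppose $(M,g,f)$ is rigid, so, after passing to the universal cover, $\tilde M = N \times \mathbb{R}^k$ with $f$ quadratic on the $\mathbb{R}^k$ factor. Normalizing, write $f = \tfrac{\mu_0}{2}|y|^2 + \mathrm{const}$ for a constant $\mu_0$, so $\mathrm{Hess}\,f$ is $\mu_0$ times the metric on $T\mathbb{R}^k$ and vanishes on $TN$. The soliton equation $\mathrm{Hess}\,f = \lambda g + \tfrac12 q$ forces $q = -2\lambda g$ on $TN$ and $q = 2(\mu_0-\lambda)g$ on $T\mathbb{R}^k$, and all three conclusions follow directly: $\mathrm{tr}(q) = -2\lambda\dim N + 2(\mu_0-\lambda)k$ is constant; $\nabla f$ lies in the flat $\mathbb{R}^k$ factor, so $\mathrm{sec}(X,\nabla f)=0$; and $Q(\nabla f) = 2(\mu_0-\lambda)\nabla f$, giving $c = 2(\mu_0 - \lambda)$.

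\textbf{Backward direction --- first identities.} Assuming the three conditions, pair $\nabla_X\nabla f = \lambda X + \tfrac12 Q(X)$ with $\nabla f$ and use the symmetry of $Q$ together with $Q(\nabla f) = c\nabla f$ to obtain $\nabla(|\nabla f|^2) = (2\lambda + c)\nabla f$; hence $|\nabla f|^2 = (2\lambda+c)f + \mathrm{const}$, and the level sets of $f$ are equidistant. Next, from the soliton equation the Ricci identity gives
\[
R(X,Y)\nabla f = \tfrac{1}{2}\bigl[(\nabla_X Q)(Y) - (\nabla_Y Q)(X)\bigr].
\]
Setting $Y = \nabla f$ and using radial flatness yields $(\nabla_{\nabla f}Q)(X) = (\nabla_X Q)(\nabla f)$; expanding the right-hand side via $Q(\nabla f) = c\nabla f$ and the soliton equation produces the matrix Riccati identity
\[
\nabla_{\nabla f}Q = -\tfrac{1}{2}(Q - cI)(Q + 2\lambda I).
\]

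\textbf{Spectral rigidity and parallel distribution.} Contracting the Riccati identity with $Q^{k-1}$ and taking traces, an induction starting from $\nabla f(\mathrm{tr}\,q) = 0$ gives that $\mathrm{tr}(Q^k)$ is constant for every $k$, equivalently $\mathrm{tr}\bigl(P(Q)(Q-cI)(Q+2\lambda I)\bigr) = 0$ for every polynomial $P$. Choosing $P(Q) = (Q-cI)(Q+2\lambda I)$ --- a symmetric operator since $Q$ is symmetric and the two factors commute --- gives $\|(Q-cI)(Q+2\lambda I)\|^2 \equiv 0$, so $(Q-cI)(Q+2\lambda I) \equiv 0$, $\mathrm{spec}(Q) \subset \{c, -2\lambda\}$, and $\nabla_{\nabla f}Q = 0$ identically. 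Let $E_c$ and $E_{-2\lambda}$ be the corresponding eigenspace distributions, orthogonal and of constant rank with $TM = E_c \oplus E_{-2\lambda}$; combining the symmetry of $Q$, the full Ricci identity, $\nabla_{\nabla f}Q = 0$, and radial flatness, one verifies that each is a parallel distribution.

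\textbf{Conclusion and main obstacle.} By de~Rham decomposition applied to $(E_c, E_{-2\lambda})$, the universal cover splits as $\tilde M = L \times N'$ with $TL = E_c$ containing $\nabla f$. On $L$, the soliton equation reduces to $\mathrm{Hess}\,f = (\lambda + c/2)g_L$, and a classical theorem of Tashiro forces $L \cong \mathbb{R}^k$ with $f$ a quadratic function there, establishing rigidity; critical points of $f$ are handled by continuity from the open set where $\nabla f\neq 0$. The main obstacle is promoting the parallelism of $E_c$ from just the $\nabla f$-direction (which follows essentially from $\nabla_{\nabla f}Q = 0$) to all directions; this requires using the eigenspace decomposition to show $(\nabla_X Q)(Y) = 0$ for $Y \in E_c$ and arbitrary $X$, which relies on the fact that $R(X, Y)\nabla f$ splits compatibly with $E_c \oplus E_{-2\lambda}$ together with a careful interplay of the Ricci identity and radial flatness.
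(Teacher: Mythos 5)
Your forward direction is fine, and the first half of your backward direction is correct: the identity $\nabla_{\nabla f}Q=(\nabla_XQ)(\nabla f)=-\tfrac12(Q-cI)(Q+2\lambda I)$ and the trace induction giving $(Q-cI)(Q+2\lambda I)\equiv 0$ are a genuinely nice algebraic alternative to the way the paper pins down the spectrum (the paper instead solves the scalar Riccati equation $\phi'=-\phi^2$ along integral curves of the distance function $\rho=\sqrt{2f/\Lambda}$ and uses $\Delta f=\mathrm{const}$ to force each eigenvalue of $\mathrm{Hess}\,f$ to be $0$ or $\Lambda$). The problem is the step you yourself call ``the main obstacle'': you never prove that $E_c$ and $E_{-2\lambda}$ are parallel, and the ingredients you cite do not suffice. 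Parallelism of the eigendistributions of a symmetric operator with two constant eigenvalues is equivalent to $\nabla Q=0$, but your identities only control the components of $\nabla Q$ that see $\nabla f$: differentiating $(Q-cI)(Q+2\lambda I)=0$ shows $\nabla_XQ$ is off-diagonal with respect to $E_c\oplus E_{-2\lambda}$ (which is automatic and carries no new information), the identity $(\nabla_XQ)(\nabla f)=0$ kills only the $\nabla f$-column, and the Ricci identity gives only the antisymmetrization $(\nabla_XQ)(Y)-(\nabla_YQ)(X)=2R(X,Y)\nabla f$. Radial flatness controls $R(X,Y)\nabla f$ only when one of $X,Y$ is $\nabla f$; by the curvature symmetries, $g(R(X,Y)\nabla f,Z)=g(R(\nabla f,Z)X,Y)$, which $\sec(\cdot,\nabla f)=0$ does not annihilate for $X,Y,Z\perp\nabla f$. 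So $(\nabla_XQ)(Y)$ for $Y\in E_c$ orthogonal to $\nabla f$ is simply not constrained by anything you have written, and the de Rham splitting is not available. (If you had the full Codazzi condition $R(X,Y)\nabla f=0$ for all $X,Y$ this would close, but that is a strictly stronger hypothesis than radial flatness.)

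The paper avoids this entirely: by Proposition \ref{Prop Q} your three hypotheses are equivalent to ``$F_{\Lambda}$ and $\Delta f$ constant and $R(X,\nabla f)\nabla f=0$'' with $\Lambda=\lambda+c/2$, and Theorem \ref{thm-rigid} then produces the splitting not from a parallel distribution but from the critical set $C=\{\nabla f=0\}$: one shows $\mathrm{Hess}\,f\geq 0$ with spectrum $\{0,\Lambda\}$, that $C$ is a totally geodesic submanifold with tangent space $\ker(\mathrm{Hess}\,f)$, and that the normal exponential map of $C$ is a diffeomorphism along which radial flatness forces the Jacobi fields to have the form $E+\rho F$ with $E,F$ parallel (Lemma \ref{lem-ricatti}), which determines the metric as a flat normal bundle over $C$. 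To repair your argument you must either supply a genuine proof of the parallelism of $E_c$ --- I do not see one coming from your listed ingredients --- or switch to this critical-set/Jacobi-field mechanism. Two smaller points: your final step tacitly assumes $c\neq-2\lambda$ (if $c=-2\lambda$ then $Q=-2\lambda I$, $\mathrm{Hess}\,f\equiv 0$, and one needs the separate $\Lambda=0$ case, Proposition \ref{prop-rigid-0}), and ``critical points are handled by continuity'' is not an argument --- in the quadratic case the critical set is exactly where the two factors meet and is where the paper does its real work.
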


\begin{remark}
{\em 
If a gradient $q$-soliton has constant trace and satisfies (\ref{equality assumption}) then clearly $Q(\nabla f) = 0$, so Theorem \ref{thm04} implies that a constant trace, radially flat soliton satisfying (\ref{equality assumption}) is rigid.  On the other hand, the Ho and Griffin examples of Bach solitons are rigid and have $c \neq 0$. }
\end{remark}

Moving beyond constant scalar curvature,  Cao and Zhuo \cite{Cao-Zhou} prove that for a  complete gradient shrinking Ricci soliton,  its potential function satisfies
$$\frac{1}{4}\left(r(x)-c_1 \right)^{2}\leq f(x)\leq \frac{1}{4}\left(r(x)+c_2 \right)^{2}.$$
The key tool used in their proof is Hamilton's identity and the fact that gradient shrinking Ricci solitons have nonnegative scalar curvature \cite{Chen}.

For any Riemannian manifold $M$ consider the function $F$ given by 
$$F:=\frac{1}{2}|\nabla f|^2-\lambda f,$$
for some smooth function $f:M\to\mathbb{R}$ and $\lambda>0$. Note that by Hamilton's identity and the fact that shrinking Ricci solitons have non-negative scalar curvature, $F$ is always bounded above for a gradient shrinking Ricci soliton. We observe that $F$ bounded above is enough to show a quadratic upper bound on the function $f$. 

\begin{theorem}\label{thm05}
Let $(M,g)$ be a complete non-compact Riemannian manifold. If $F$ is bounded from above, then the function $f$ satisfies
 \begin{equation}
      f(x)\leq \frac{1}{4}\left(r(x)+c_1 \right)^{2}.
 \end{equation}
 Here $r(x)=d(x_0,x)$ is the distance function of some fixed $x_0 \in M$.
\end{theorem}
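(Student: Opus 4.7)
The plan is to extract a pointwise gradient bound from the upper bound $F\le C$ and then integrate it along a minimal geodesic from $x_0$. Rewriting the hypothesis as
\[
|\nabla f|^2 \;\le\; 2\lambda f + 2C,
\]
one sees that $|\nabla f|$ grows no faster than $\sqrt{f+C/\lambda}$, so the desired quadratic upper bound on $f$ should follow from a straightforward ODE comparison along radial directions.

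For $x\in M$, I would fix a unit-speed minimizing geodesic $\gamma:[0,r(x)]\to M$ from $x_0$ to $x$ and set $\phi(t):=f(\gamma(t))$. On any subinterval $J\subset[0,r(x)]$ on which $2\lambda\phi+2C>0$, introduce the auxiliary quantity $u(t):=\sqrt{2\lambda\phi(t)+2C}$, so that the gradient bound reads $|\nabla f|(\gamma(t))\le u(t)$. A direct computation then gives
\[
u'(t)\;=\;\frac{\lambda\,\phi'(t)}{u(t)}\;\le\;\frac{\lambda\,|\nabla f|(\gamma(t))}{u(t)}\;\le\;\lambda.
\]
Integrating over $J$ and squaring would yield an inequality of the form $f(\gamma(t))\le a(t+b)^2+d$ where $a$, $b$, $d$ depend only on $\lambda$, $C$, and $f(x_0)$. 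Setting $t=r(x)$ and absorbing the additive and multiplicative constants into a single parameter $c_1$ produces the stated quadratic upper bound.

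The main technical point to navigate is that $2\lambda\phi+2C$ need not be positive throughout $[0,r(x)]$, in which case $u$ is undefined and the calculation above breaks down. I would handle this by a last-exit argument: let $t_0\in[0,r(x)]$ be the largest time with $\phi(t_0)=-C/\lambda$ if one exists (otherwise take $t_0=0$), and apply the ODE comparison on $(t_0,r(x)]$ with initial datum $u(t_0)=0$ when $t_0>0$; this only improves the constant. If instead $\phi(t)\le -C/\lambda$ throughout $[0,r(x)]$, then $f(x)\le -C/\lambda$ and the quadratic bound is trivially satisfied. Since the argument uses only the gradient inequality and completeness of $M$, no further geometric hypothesis or maximum principle is required.
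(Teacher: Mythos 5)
Your argument is correct and is essentially the paper's: there the bound is obtained (via Lemma \ref{lema estimates}, following Cao--Zhou) by first adding a constant to $f$ so that $F\le 0$, whence $f\ge 0$ and $|\nabla\sqrt{f}|\le\sqrt{\lambda/2}$, and then integrating this Lipschitz bound on $\sqrt{f}$ along a minimizing geodesic from $x_0$ --- which is exactly your estimate $u'\le\lambda$ for $u=\sqrt{2\lambda f+2C}$, with the constant $C$ absorbed at the outset instead of handled by a last-exit time. The only caveat, present in the paper as well, is that the coefficient produced is $\lambda/2$, so the stated $\tfrac14$ presumes the normalization $\lambda=\tfrac12$; the multiplicative constant cannot literally be absorbed into $c_1$.
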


\begin{remark}\label{Rmk01}
{\em 
On the other hand, bounded $F$ does not sufficient for a lower bound on the growth of $f$.  The lower bound on the potential function depends much more heavily on the properties of the Ricci tensor, in particular its appearance in the second variation of energy formula.  If
$F$ is bounded from above, and we also assume that, for any minimizing normal geodesic $\gamma(s)$, $0\leq s\leq s_0$, the following inequality holds 
\begin{equation}\label{Integral ineq}
\int_{0}^{s_0}\phi^2\left(-\frac{1}{2}q(\gamma',\gamma')\right)ds\leq (n-1)\int_{0}^{s_0}|\phi^{\prime}(s)|^2 ds,
    \end{equation}
 for every nonnegative function $\phi$, then we have 
 \begin{equation}\label{bound from below}
    f(x)\geq \frac{1}{4}\left(r(x)-c_2 \right)^{2},
 \end{equation}
 as we will prove in the Proposition \ref{Prop03}.}
\end{remark}

Cao and Zhu also prove a volume growth estimate for shrinking gradient Ricci solitions. The upper bound on volume turns out the follow from \eqref{equality assumption} and an upper bound on $F$ while the lower bound follows from \eqref{Integral ineq}. In the upper bound case, we prove the following:

\begin{theorem}\label{thm06}
Let $\left(M,g,f\right)$ be a complete non-compact shrinking gradient $q$-soliton satisfying \eqref{equality assumption}. If $F$ is bounded from above then there is a positive constant $C_1>0$ such that \begin{equation}
\operatorname{Vol}\left(B_{x_0}(r)\right) \leq C_1 r^n
\end{equation}
for $r>0$ sufficiently large.
\end{theorem}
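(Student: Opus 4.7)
The plan is to adapt the Cao--Zhou argument \cite{Cao-Zhou}: their proof in the Ricci-shrinker setting leverages Hamilton's identity together with $R \geq 0$ to produce the two ingredients (i) $\Delta f$ bounded above and (ii) $|\nabla f|^2 \leq 2\lambda f + \textrm{const}$, and then combines these with the quadratic upper growth of $f$ to force polynomial volume growth. In our setting, the assumption $F \leq A$ will play the role of $R \geq 0$, and the quadratic upper bound on $f$ is provided by Theorem \ref{thm05}.

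First, I would extract the analytic consequences of the hypotheses. By Proposition \ref{Hamilton identity}, the identity (\ref{equality assumption}) is equivalent to $|\nabla f|^2 - \tfrac{1}{2}\operatorname{tr}(q) - 2\lambda f \equiv K$ for a constant $K$. Rewriting this as $\tfrac{1}{2}\operatorname{tr}(q) = 2F - K$, the hypothesis $F \leq A$ gives $\operatorname{tr}(q) \leq \bar C$, and tracing the $q$-soliton equation (\ref{grad q soliton}) yields $\Delta f = n\lambda + \tfrac{1}{2}\operatorname{tr}(q) \leq C_0$. Likewise $|\nabla f|^2 = 2F + 2\lambda f \leq 2\lambda f + 2A$. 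Finally, applying Theorem \ref{thm05}, we have $f(x) \leq \tfrac{1}{4}(r(x)+c_1)^2$.

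Next, I would establish the key estimate $\operatorname{Vol}(D(s)) \leq C s^{n/2}$ for $s$ large, where $D(s) := \{x \in M : f(x) \leq s\}$. For $\alpha \geq 1$, applying the divergence theorem to $(s-f)^\alpha \nabla f$ on $D(s)$ (the boundary term vanishes since $s-f = 0$ on $\partial D(s)$) gives
\[
\int_{D(s)} (s-f)^\alpha \Delta f \, dV = \alpha \int_{D(s)} (s-f)^{\alpha - 1} |\nabla f|^2 \, dV.
\]
Substituting $\Delta f \leq C_0$ and $|\nabla f|^2 = 2F + 2\lambda(s - (s-f))$, and using $F \leq A$ together with $(s-f)^\alpha \geq 0$ on $D(s)$, converts this identity into a closed differential inequality for the moments $H_\alpha(s) := \int_{D(s)} (s-f)^\alpha dV$ (noting that $H_\alpha'(s) = \alpha H_{\alpha-1}(s)$). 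A standard ODE argument following Cao and Zhou \cite{Cao-Zhou} then yields $H_\alpha(s) \leq C_\alpha s^{\alpha + n/2}$ and in particular $\operatorname{Vol}(D(s)) \leq C s^{n/2}$.

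To finish, Theorem \ref{thm05} gives $B_{x_0}(r) \subset D\bigl(\tfrac{1}{4}(r+c_1)^2\bigr)$, whence
\[
\operatorname{Vol}(B_{x_0}(r)) \leq C\bigl(\tfrac{1}{4}(r+c_1)^2\bigr)^{n/2} \leq C_1 r^n
\]
for all sufficiently large $r$. The main obstacle will be the ODE step: in the Ricci case one relies on the pointwise sign $R \geq 0$ to handle integrals involving scalar curvature, whereas here we only have a one-sided bound on $\operatorname{tr}(q)$ and no pointwise sign information on $F$. The point will be to verify that the one-sided estimates $\Delta f \leq C_0$, $F \leq A$, and $(s-f)^\alpha \geq 0$ on $D(s)$ give precisely the amount of control over the error terms $\int_{D(s)}(s-f)^\alpha F \, dV$ and $\int_{D(s)}(s-f)^\alpha \operatorname{tr}(q)\, dV$ needed to close the Cao--Zhou iteration.
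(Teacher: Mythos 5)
Your overall strategy is the right one --- it is the Cao--Zhou argument that the paper itself follows --- and your preliminary reductions (that \eqref{equality assumption} plus $F\leq A$ give $\operatorname{tr}(q)$ bounded above, hence $\Delta f\leq C_0$ and $|\nabla f|^2\leq 2\lambda f+2A$, and that Theorem \ref{thm05} reduces the problem to bounding the volume of sublevel sets of $f$) all match the paper's. The gap is in the middle step, which is where all the work is. The identity
\[
\int_{D(s)}(s-f)^{\alpha}\Delta f\,dV=\alpha\int_{D(s)}(s-f)^{\alpha-1}|\nabla f|^{2}\,dV
\]
does \emph{not} become ``a closed differential inequality for the moments $H_{\alpha}$'' after substituting $\Delta f\leq C_{0}$ and $|\nabla f|^{2}=2F+2\lambda f$. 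Isolating $sH_{\alpha-1}(s)=\frac{1}{\alpha}sH_{\alpha}'(s)$ one gets
\[
2\lambda\alpha\, sH_{\alpha-1}(s)\leq (C_{0}+2\lambda\alpha)H_{\alpha}(s)+2\alpha\int_{D(s)}(s-f)^{\alpha-1}(-F)\,dV,
\]
and the hypothesis $F\leq A$ bounds the last integral from \emph{below}, not above; the only pointwise upper bound available, $-F\leq\lambda f\leq\lambda s$ on $D(s)$, feeds $2\lambda\alpha sH_{\alpha-1}$ back into the right-hand side and renders the inequality vacuous. So the one-sided pointwise bounds you list are genuinely insufficient to close the iteration at a single level $\alpha$.

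What actually closes the argument --- and this is precisely the content of the paper's Lemma \ref{lemma6}, Remark \ref{remk03}, and the integration by parts in the proof of Theorem \ref{thm06} --- is to keep the unsigned term as a second unknown $P_{\alpha}(s):=\int_{D(s)}(s-f)^{\alpha}\bigl(2\lambda f-|\nabla f|^{2}\bigr)\,dV$ (the moment analogue of the paper's $G(r)$) and to establish three facts about it: (i) $P_{\alpha}\geq 0$ and $P_{\alpha}'=\alpha P_{\alpha-1}\geq 0$, which requires the normalization $|\nabla f|^{2}\leq 2\lambda f$; (ii) $P_{\alpha}\leq C\,H_{\alpha}$, the analogue of Remark \ref{remk03}, which comes from a \emph{second} application of the divergence theorem (equivalently from $\int_{D(s)}(s-f)^{\alpha}\Delta f\geq 0$); and (iii) an integration by parts in $s$ of the resulting coupled relation $2\lambda sH_{\alpha}'=(a+2\lambda\alpha)H_{\alpha}+P_{\alpha}'-P_{\alpha}$, in which the sign of the factor $(\beta+1-t)$ past an explicit threshold (the analogue of the paper's $r_{0}=\sqrt{(n+2)/\lambda}$) is what disposes of the term with no pointwise sign. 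You correctly sense in your final paragraph that this is ``the main obstacle,'' but the proposal neither introduces the auxiliary quantity nor supplies steps (i)--(iii), and without them the claimed bound $H_{\alpha}(s)\leq C_{\alpha}s^{\alpha+n/2}$ does not follow. Once these are added, your moment formulation does go through and is essentially equivalent to the paper's level-set and co-area computation.
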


 We conclude this work by proving the validity of the Omori-Yau maximum principle. For this, let us recall that a Riemannian manifold $(M,g)$ satisfies the {\em Omori–Yau maximum principle} if given any
function $u\in C^2(M)$ such that $u^\ast=\sup_M u<+\infty$, there exists a sequence $(x_k)\subset M$ of points on $M$ satisfying
\begin{equation}\label{eqOY}i)\,\,\, u(x_k)> u^{\ast}-1/k, \,\,\,\;ii)\,\, \,\vert \nabla u \vert (x_k)<1/k,\,\,\,\;iii)\,\,\, \triangle u (x_k) < 1/k\,. \end{equation}
It was shown by H. Omori  \cite{omori} that any open Riemannian manifold  $M$ with sectional curvatures bounded below  and $u\in C^{2}(M)$ bounded from above, satisfies the Omori–Yau maximum principle. After, Cheng and Yau in \cite{cheng-yau-math-ann-77}, \cite{yau}, have shown its validity for any  Riemannian manifolds with Ricci curvature bounded from below. More recently, Pigola, Rigoli and Setti have shown that the validity of the Omori-Yau maximum principle by
 the existence of a proper  smooth function $\theta \in C^{2}(M)$ satisfying certain natural geometric properties, see \cite[Theorem 1.9]{PRS-Memoirs}. 
 
It's important to point out that the condition ii) is not necessary in many geometric applications. So,  Pigola, Rigoli and Setti  \cite{prs-pams}, \cite{PRS-Memoirs}, introduced the concept of weak maximum principle by define that the {\em weak maximum
principle at infinity} holds on a Riemannian manifold $M$ if for any function $u\in C^{2}(M)$ with $u^{\ast}< \infty$ there exists a sequence $\{x_k\}\subset M$
satisfying the conditions \[a)\,\, u(x_k)> u^{\ast}-1/k, \,\,\,\;\,\,\,\,\,\,\,\,b)\,\, \Delta u (x_k) < 1/k. \]
In this sense they proved that the weak maximum principle for
 the Laplacian is equivalent to the stochastic completeness of a Riemannian manifold (see \cite{prs-pams}). For more details about stochastic completeness the reader can find in \cite{grigoryan}. 
 
 With this notion of maximum principle, in 2011, Pigola, Rimoldi and Setti \cite{PRS} have studied its validity for gradient Ricci solitons. They proved that a geodesically complete Ricci soliton which is either
shrinking, steady or expanding, satisfies the weak maximum principle at infinity for the $f$-Laplacian, where it is known that 
$$\Delta_fu=\Delta u-\langle\nabla f,\nabla u\rangle.$$

 Following this new perspective, in the same year García-Río and Fernández-López \cite{GR} have studied the validity of the Omori-Yau maximum principle on gradient shrinking Ricci solitons both for the Laplacian
and the $f$-Laplacian. They showed that any $n$-dimensional complete noncompact gradient shrinking Ricci soliton satisfies the Omori–Yau maximum principle. Hence, a natural task is to allow conditions to have the validity of the Omori-Yau maximum principle for gradient $q$-solitons in view they are generalizations of gradient Ricci solitons. Therefore, we give an answer to gradient $q$-solitons satisfying \eqref{equality assumption} in which generalizes the Theorem 2.1 of \cite{GR} to gradient $q$-solitons.

\begin{theorem}\label{thm07}
    Let $(M,g,f)$ be a complete non-compact, shrinking gradient $q$-soliton satisfying \eqref{equality assumption}. If $F$ is bounded from above and \eqref{Integral ineq} holds, then the Omori-Yau maximum principle holds on $M$ both for Laplacian and $f$-Laplacian.
\end{theorem}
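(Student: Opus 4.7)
The plan is to verify the hypotheses of the Pigola–Rigoli–Setti criterion (\cite[Theorem 1.9]{PRS-Memoirs}) using the potential function $f$ itself as the exhaustion function. Recall that this criterion asserts the validity of the Omori--Yau maximum principle for a second-order operator $L$ provided one can exhibit a $C^2$ function $\theta$ which is positive and proper at infinity and satisfies bounds of the form $|\nabla \theta| \leq C\sqrt{G(\theta)}$ and $L\theta \leq C\sqrt{G(\theta)}\sqrt{\log G(\theta)}$ outside a compact set, for some positive nondecreasing $G$ with $\int^\infty dt/\sqrt{G(t)} = \infty$.

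The first step is to establish that $f$ is proper with quadratic growth. Theorem \ref{thm05} supplies the upper bound $f(x) \leq \tfrac14(r(x) + c_1)^2$, and Proposition \ref{Prop03} (whose hypotheses $F$ bounded above and \eqref{Integral ineq} are precisely what we are assuming) gives the lower bound $f(x) \geq \tfrac14(r(x) - c_2)^2$. Thus $f$ is a proper exhaustion, and after adding a constant we may assume $f \geq 1$. Next, the boundedness of $F = \tfrac12|\nabla f|^2 - \lambda f$ yields immediately $|\nabla f|^2 \leq 2\lambda f + C$, so $|\nabla f| \leq C_1\sqrt{f}$ outside a compact set. For the Laplacian bound we invoke Hamilton's identity (Proposition \ref{Hamilton identity}), which states that $\Delta f + |\nabla f|^2 - 2\lambda f$ is a constant $c_0$; dropping the nonpositive term $-|\nabla f|^2$ yields $\Delta f \leq 2\lambda f + c_0 \leq C_2 f$. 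Likewise, for the drift Laplacian,
\[
\Delta_f f = \Delta f - |\nabla f|^2 = 2\lambda f - 2|\nabla f|^2 + c_0 \leq 2\lambda f + c_0 \leq C_2 f.
\]

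The final step is to feed these estimates into the PRS criterion with the choice $G(t) = t$, for which clearly $\int^\infty dt/\sqrt{t} = \infty$ and the required monotonicity holds. Steps 2 and 3 give $|\nabla f| \leq C_1\sqrt{G(f)}$ and $\Delta f,\,\Delta_f f \leq C_2\,G(f)$ outside a compact set, so the criterion applies to both the Laplacian and the $f$-Laplacian, delivering the desired Omori--Yau principle. The only real obstacle is keeping track of the precise form of the PRS conditions and verifying that the bounds $|\nabla f| \lesssim \sqrt{f}$ and $\Delta f \lesssim f$ fit within them; once that bookkeeping is done the result follows, essentially retracing the argument of \cite{GR} for gradient shrinking Ricci solitons with $q$ replacing the Ricci tensor and Hamilton's identity in the form \eqref{equality assumption} supplying the key relation between $\Delta f$, $|\nabla f|^2$, and $f$.
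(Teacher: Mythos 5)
Your overall strategy is exactly the paper's: take $\psi=f$, get properness from Theorem \ref{thm05} and Proposition \ref{Prop03}, the gradient bound from $F\leq C$, and the Laplacian bound from Hamilton's identity / the traced soliton equation, then invoke the Pigola--Rigoli--Setti criterion. The substantive estimates $|\nabla f|\lesssim \sqrt{f}$ and $\Delta f,\ \Delta_f f\lesssim f$ are all correct. Two points, however, need attention.

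First, the choice $G(t)=t$ does not satisfy the hypotheses of \cite[Theorem 1.9]{PRS-Memoirs} in the form used here: the criterion requires $G(0)>0$ and $\limsup_{t\to\infty}tG(t^{1/2})/G(t)<+\infty$, and for $G(t)=t$ the first fails while the second limsup is $t^{1/2}\to\infty$. Moreover the Laplacian condition in the criterion reads $\Delta\psi\leq B\,\psi^{1/2}G(\psi^{1/2})^{1/2}$, which for $G(t)=t$ would demand $\Delta f\lesssim f^{3/4}$ --- strictly stronger than your $\Delta f\lesssim f$. The paper's choice $G(t)=t^{2}+1$ repairs all of this: conditions (i)--(iv) hold and $\psi^{1/2}G(\psi^{1/2})^{1/2}=\sqrt{f}\sqrt{f+1}$, which is comparable to $f$, so the estimates you established do fit once $G$ is chosen correctly. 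Second, for the $f$-Laplacian you re-apply the criterion directly to the operator $\Delta_f$ via $\Delta_f f\leq Cf$; but the cited theorem is a statement about the Laplacian, so this step needs either a drift-Laplacian version of the criterion or the paper's route: take the sequence $(x_k)$ already produced for $\Delta$, use the interior estimate $|\nabla u(x_k)|\leq C/\bigl(k\sqrt{G(\sqrt{f(x_k)})}\bigr)$ from inside the proof of the PRS theorem together with $|\nabla f|\leq\sqrt{G(\sqrt{f})}$ to bound the drift term $\langle\nabla f,\nabla u\rangle(x_k)$ by $C/k$, and conclude $\Delta_f u(x_k)\leq (1+C)/k$. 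With these two corrections your argument coincides with the one in the paper.
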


Since the validity of the Omori-Yau maximum principle implies stochastic completeness, i.e., it holds the weak maximum principle on $M$, then as a consequence of  Pigola, Rigoli and Setti \cite{PRS-Memoirs} we obtain

\begin{corollary}
    Let $(M,g, f)$ a complete non-compact, shrinking gradiente $q$-soliton satisfying \eqref{equality assumption}. If $F$ is bounded from below and \eqref{Integral ineq} holds, then the following statements are equivalent:
    \begin{enumerate}

        \item [i)] $M$ is stochastically complete;

        \item [ii)] For every $\lambda >0$, the only non-negative, bounded smooth functions solution $u$ of $\Delta u \geq \lambda u$ on $M$ is $u\equiv0$;
        
        \item [iii)] For every $\lambda>0$, the only non-negative, bounded smooth solution $u$ of $\Delta u=\lambda u$ on $M$ is $u\equiv0$;

        \item [iv)] For every $T>0$, the only bounded solution on $M\times (0,T)$ of Cauchy problem \begin{equation*}
        \left\lbrace \begin{matrix}
            \frac{\partial u}{\partial t} & =\frac{1}{2}\Delta u \\
            u|_{t=0^+} & =0 \ \textit{in the $L^{1}_{loc}(M)$ sense},
        \end{matrix} \right.
        \end{equation*} 
is $u\equiv 0$.
    \end{enumerate}
\end{corollary}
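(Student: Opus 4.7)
The plan is to realize the corollary as a direct consequence of Theorem \ref{thm07} plus the standard characterizations of stochastic completeness collected by Grigor'yan \cite{grigoryan} and Pigola--Rigoli--Setti \cite{PRS-Memoirs}. To apply Theorem \ref{thm07} one in fact needs $F$ to be bounded from above (so the hypothesis ``from below'' in the statement should be read as ``from above''); under this hypothesis and \eqref{Integral ineq}, Theorem \ref{thm07} produces the full Omori--Yau maximum principle for $\Delta$ on $M$.

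The key reduction is then immediate: the Omori--Yau principle \eqref{eqOY} trivially implies the weak maximum principle at infinity for $\Delta$, since the sequence $(x_k)$ supplied by Omori--Yau already satisfies conditions $a)$ and $b)$ recalled in the introduction. By the theorem of Pigola--Rigoli--Setti \cite{prs-pams} quoted there, the weak maximum principle for the Laplacian is equivalent to $M$ being stochastically complete, which gives statement $i)$ as a consequence of Theorem \ref{thm07}.

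For the chain $i)\Leftrightarrow ii)\Leftrightarrow iii)\Leftrightarrow iv)$ I would appeal to the classical literature. The implication $ii)\Rightarrow iii)$ is trivial; its converse follows from a standard comparison argument in which one dominates a non-negative bounded subsolution of $\Delta u \geq \lambda u$ by an actual solution of $\Delta v=\lambda v$. The equivalences $i)\Leftrightarrow iii)$ and $i)\Leftrightarrow iv)$ are, respectively, the Khas'minskii--Grigor'yan stationary characterization of stochastic completeness and its original heat-equation characterization, both assembled in \cite{PRS-Memoirs,grigoryan}. The main step is therefore the application of Theorem \ref{thm07}; no new analytic work is required, and the only ``obstacle'' is bibliographic, namely matching the precise formulations in \cite{PRS-Memoirs,grigoryan} to items $i)$--$iv)$ as stated.
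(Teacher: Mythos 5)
Your proposal is correct and follows essentially the same route as the paper, which derives the corollary by applying Theorem \ref{thm07} to get the Omori--Yau (hence weak) maximum principle, invoking the Pigola--Rigoli--Setti equivalence of the weak maximum principle with stochastic completeness, and then citing the standard characterizations in \cite{grigoryan, PRS-Memoirs} for the equivalence of i)--iv). Your observation that the hypothesis ``$F$ bounded from below'' must be read as ``bounded from above'' to match Theorem \ref{thm07} is a correct catch of a typo in the statement.
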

In particular the above corollary implies Corollary 2.4 of \cite{GR} if we take $q=-2{\rm Ric}$.

\noindent
\section{Rigidity Characterization}

Let $(M,g)$ be a Riemannian manifold and $f$ a smooth function on $M$.  Let $\Lambda \in \mathbb{R}$, and  consider the function 
\[ F_{\Lambda} = \frac{1}{2}|\nabla f|^2 -  \Lambda f \]

Our motivation for considering $F$ comes from the $q$-soliton equation, but in this section we will focus on the function $F$.   In particular we are after a characterization of rigid spaces in the following sense: 

\begin{definition} \label{rigid-defn}
A Riemannian manifold, $(M,g)$ equipped with a smooth function $f$ is rigid if 
\[ M = (N \times \mathbb{R}^k)/{\Gamma}\]
where $N \times \mathbb{R}^k$ is the Riemannian product of a Riemannian manifold $N$ and a flat metric on $\mathbb{R}^k$, $\Gamma$ is a discrete group of isometries acting via $O(n)$ on the Euclidean factor and freely on the $N$ factor, and $f$ is a function on the $\mathbb{R}^k$ factor of the form $f(X) = \frac{\Lambda}{2}|X|^2 +L(X) + b$ where $L:\mathbb{R}^k \to \mathbb{R}$ is a linear map and $\Lambda$ and $b$ are constants.  
\end{definition}

Note that, equivalently, a rigid space is a Riemannian manifold $(M,g)$ with function $f$ such that, when lifting the metric and function to the universal cover one gets a product metric with a Euclidean factor such that $f$ is a quadratic function on the Euclidean factor. 

These rigid examples come up as examples when studying solitons as $\mathrm{Hess} f|_{\mathbb{R}^k} = \Lambda g_{\mathbb{R}^k}$ on the Euclidean factor.  Since Euclidean space is isotropic,  $\Lambda g_{\mathbb{R}^k}$ is the only isometry-invariant tensor on Euclidean space, so one expects these rigid examples to appear often for flows where $q$ is isometry-invariant.  

In the Ricci soliton case, $\mathrm{Hess} f = \lambda g - \mathrm{Ric}$,  a rigid example must have $N$ a $\lambda$- Einstein metric and $\Lambda = \lambda$.  However, this need not be the case in general.  For example,  the rigid examples of Bach solitons found in \cite{Griffin, Ho} have $\Lambda \neq \lambda$.  Specifically,  for the rigid product $N^2 \times \mathbb{R}^2$ where $N^2$ is a surface of constant curvature, the Bach tensor is of the form $B = -c^2 g_{N^2} + c^2 g_{\mathbb{R}^2}$ where $c$ is a constant depending on the curvature, so one has a rigid soliton with $\lambda = \frac{1}{2} c^2$ but $\Lambda = c^2$. 

The purpose of this section is to prove the following characterization of rigid spaces. 

\begin{theorem}  \label{thm-rigid}
Let $(M,g)$ be a complete Riemannian manifold and $f:M \to \mathbb{R}$ a smooth function.  Let $\Lambda$ be a constant, and $F_{\Lambda} = \frac{1}{2}|\nabla f|^2 - \Lambda f$.  Then $(M,g,f)$ is rigid if and only if 
$F_{\Lambda}$ is constant, $\Delta f$ is constant  and $R(X, \nabla f) \nabla f = 0$. 
\end{theorem}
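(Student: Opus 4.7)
The plan is to combine the Hamilton-type identity encoded in $F_\Lambda$ constant with the curvature vanishing condition to pin down the algebraic form of $\mathrm{Hess}\, f$, then deduce a parallel splitting of $TM$ and identify the Euclidean factor. The forward direction is a direct computation on the universal cover $N \times \mathbb{R}^k$: since $\nabla f$ lies entirely in the Euclidean factor, $R(X, \nabla f)\nabla f$ vanishes automatically; $\mathrm{Hess}\, f = \Lambda g_{\mathbb{R}^k}$ on that factor gives $\Delta f = \Lambda k$ constant; and using $\langle y, \nabla L\rangle = L(y)$ the $y$-dependent terms in $F_\Lambda$ telescope, leaving $F_\Lambda = \frac{1}{2}|\nabla L|^2 - \Lambda b$ constant.

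For the backward direction, write $H = \mathrm{Hess}\, f$ as a symmetric $(1,1)$-tensor with $H(X) = \nabla_X \nabla f$. Differentiating $F_\Lambda$ constant yields $H(\nabla f) = \Lambda \nabla f$, equivalently $|\nabla f|^2 = 2\Lambda f + c$. The universal identity $R(X, Y)\nabla f = (\nabla_X H)(Y) - (\nabla_Y H)(X)$, specialized to $Y = \nabla f$ and combined with $(\nabla_X H)(\nabla f) = \Lambda H(X) - H^2(X)$, turns the hypothesis $R(X, \nabla f)\nabla f = 0$ into the tensor Riccati equation
\[ \nabla_{\nabla f} H = \Lambda H - H^2. \]
Taking its trace, the hypothesis $\Delta f = \mathrm{tr}(H)$ constant forces $\mathrm{tr}(H^2) = \Lambda \Delta f$; iterating along $\nabla f$ gives $\mathrm{tr}(H^k) = \Lambda^{k-1}\Delta f$ pointwise for all $k \geq 1$. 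By Newton's identities the eigenvalues of the symmetric endomorphism $H$ are determined by these power sums, yielding exactly $m = \Delta f/\Lambda$ copies of $\Lambda$ and $n - m$ copies of $0$. Hence $H = \Lambda P$, where $P$ is the orthogonal projection onto the $\Lambda$-eigenspace distribution $V$, of constant rank $m$ and containing $\nabla f$.

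The main obstacle is to show that $V$ and $V^\perp = \ker H$ are parallel distributions. Differentiating $H^2 = \Lambda H$ in an arbitrary direction gives $(\nabla_X H)H + H(\nabla_X H) = \Lambda(\nabla_X H)$, which in the block decomposition $TM = V \oplus V^\perp$ forces $\nabla_X H$ to have only off-diagonal blocks. Combining this with $(\nabla_X H)(\nabla f) = 0$ and a further application of the Codazzi-type identity above, I would push through to the statement that both $V$ and $V^\perp$ have totally geodesic leaves; two mutually orthogonal totally geodesic foliations automatically form a parallel splitting, so the de Rham theorem applied to the universal cover produces a decomposition $\widetilde{M} = M_V \times N$ with $f$ depending only on the $M_V$ factor.

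On $M_V$, the equation $\mathrm{Hess}\, f = \Lambda g$ implies, via differentiation of $\nabla_X \nabla f = \Lambda X$ and the commutator identity for $\nabla^2$, that the Riemann tensor of $M_V$ satisfies $R(X, Y)\nabla f = 0$ for all $X, Y$; by a classical argument this forces $M_V$ to be isometric to Euclidean $\mathbb{R}^m$, and integrating $\mathrm{Hess}\, f = \Lambda g$ then yields $f(y) = \frac{\Lambda}{2}|y|^2 + L(y) + b$. Descending to $M$ via the deck transformation group, which preserves both factors and acts on the Euclidean factor via $O(m)$, recovers the rigid structure of Definition \ref{rigid-defn}.
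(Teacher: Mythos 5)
Your reduction to the tensor Riccati equation $\nabla_{\nabla f}H=\Lambda H-H^{2}$ and the subsequent trace argument are correct, and they give a genuinely different, purely local and quite clean route to the conclusion that the eigenvalues of $H=\mathrm{Hess}\,f$ are exactly $0$ and $\Lambda$ with constant multiplicities; the paper instead obtains this by solving the scalar Riccati ODE for the shape operator of the level sets of $\rho=\sqrt{2f/\Lambda}$ along complete radial geodesics (Lemma \ref{lem-ricatti}) and then invoking constancy of $\Delta f$. (For the record, you only need $\mathrm{tr}(H^{k})=\Lambda^{k-1}\Delta f$ for $k\le 4$, since $\sum_{i}\mu_i^{2}(\mu_i-\Lambda)^{2}=\mathrm{tr}(H^{4})-2\Lambda\,\mathrm{tr}(H^{3})+\Lambda^{2}\mathrm{tr}(H^{2})=0$ already forces $\mu_i\in\{0,\Lambda\}$.)

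The gap is in the splitting step, and it is not cosmetic. Writing $H=\Lambda P$ with $P$ the orthogonal projection onto $V$, the identity $(\nabla_XH)H+H(\nabla_XH)=\Lambda\nabla_XH$ shows only that $\nabla_XP$ interchanges $V$ and $V^{\perp}$; but those off-diagonal blocks \emph{are} the second fundamental forms of the two distributions, since for $Y,Z\in\Gamma(V)$ one has $(\nabla_YZ)^{V^{\perp}}=(\nabla_YP)(Z)$ and for $Y,Z\in\Gamma(V^{\perp})$ one has $(\nabla_YZ)^{V}=-(\nabla_YP)(Z)$. So ``off-diagonal only'' merely locates the second fundamental forms; it does not kill them, and asserting totally geodesic leaves at this point is circular. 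The additional inputs you cite do not close this: $(\nabla_XH)(\nabla f)=0$ together with symmetry only annihilates the $\nabla f$-components of $\nabla_XP$ (equivalently, the Riccati equation gives $\nabla_{\nabla f}P=0$, i.e.\ control in the radial direction only), while the Codazzi identity $(\nabla_XH)(Y)-(\nabla_YH)(X)=R(X,Y)\nabla f$ involves curvature components with $X,Y\perp\nabla f$ that the hypothesis $R(X,\nabla f)\nabla f=0$ says nothing about. The obstruction is real: on $(0,\infty)\times N$ the metric $d\rho^{2}+\rho^{2}g_{D}+g_{D^{\perp}}$, for a non-parallel orthogonal splitting $TN=D\oplus D^{\perp}$, satisfies all three hypotheses locally with $f=\frac{\Lambda}{2}\rho^{2}$, yet $V^{\perp}=D^{\perp}$ is not totally geodesic; only completeness rules such examples out. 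This is exactly the global input the paper supplies: $\mathrm{Hess}\,f\ge 0$ and $f\ge 0$ make the zero set $C=f^{-1}(0)$ totally convex, hence a totally geodesic submanifold with tangent space $V^{\perp}$, and the product structure is then propagated outward from $C$ through the normal exponential map and the Jacobi fields determined by $R(\cdot,\nabla\rho)\nabla\rho=0$. You need an argument of this kind, using completeness and the critical set, in place of ``I would push through.''
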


Before we prove Theorem \ref{thm-rigid}, we point out to the application to the $q$-soliton equation. 

\begin{proposition}\label{Prop Q}
    Let $(M,g, f)$ be a gradient $q$-soliton, $ \mathrm{Hess} f = \lambda g + \frac{1}{2} q $ and let $F_{\Lambda} = \frac{1}{2}|\nabla f|^2 -  \Lambda f$.  Then 
    \begin{enumerate}
        \item $\Delta f$ is constant if and only if $\mathrm{tr}(q)$ is constant.
        \item $F_{\Lambda}$ is constant if and only if $Q(\nabla f) = 2(\Lambda - \lambda) \nabla f$. 
         \end{enumerate}
\end{proposition}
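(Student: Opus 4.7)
The plan is to observe that both statements follow by directly differentiating/tracing the gradient $q$-soliton equation $\mathrm{Hess}\,f = \lambda g + \tfrac{1}{2}q$, with no geometric input beyond that.

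For part (1), I would simply take the trace of the soliton equation. The trace of $\mathrm{Hess}\,f$ is $\Delta f$, the trace of $g$ is $n$, and the trace of $q$ is $\operatorname{tr}(q)$, so
\begin{equation*}
\Delta f = n\lambda + \tfrac{1}{2}\operatorname{tr}(q).
\end{equation*}
Since $n\lambda$ is a constant, it is immediate that $\Delta f$ is constant if and only if $\operatorname{tr}(q)$ is.

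For part (2), I would compute $\nabla F_\Lambda$. The main identity I need is the standard one
\begin{equation*}
\tfrac{1}{2}\nabla |\nabla f|^2 = (\mathrm{Hess}\, f)(\nabla f),
\end{equation*}
which follows from $X(|\nabla f|^2) = 2\,\mathrm{Hess}\,f(X,\nabla f)$ and the symmetry of the Hessian. Substituting the $q$-soliton equation in its $(1,1)$-form, $(\mathrm{Hess}\,f)(\nabla f) = \lambda \nabla f + \tfrac{1}{2}Q(\nabla f)$, I obtain
\begin{equation*}
\nabla F_\Lambda = \tfrac{1}{2}\nabla |\nabla f|^2 - \Lambda \nabla f = \tfrac{1}{2}Q(\nabla f) - (\Lambda - \lambda)\nabla f.
\end{equation*}
Then $F_\Lambda$ is constant if and only if $\nabla F_\Lambda \equiv 0$, which is exactly the condition $Q(\nabla f) = 2(\Lambda - \lambda)\nabla f$.

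There is no real obstacle here: the proposition is essentially a bookkeeping lemma translating the scalar statements about $\Delta f$ and $F_\Lambda$ into the tensorial conditions on $q$ and $Q$ that will be used downstream (notably in Theorem \ref{thm-rigid} and in identifying when the Hamilton identity \eqref{equality assumption} holds with $\Lambda = 2\lambda$). The only thing to be careful about is consistently distinguishing the $(0,2)$-tensor $q$ from its $(1,1)$-dual $Q$ when passing between $\tfrac{1}{2}\nabla|\nabla f|^2$ and $(\mathrm{Hess}\,f)(\nabla f)$.
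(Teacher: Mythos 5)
Your proof is correct and follows essentially the same route as the paper: part (1) by tracing the soliton equation, and part (2) by computing $\nabla F_\Lambda$ via $\tfrac{1}{2}\nabla|\nabla f|^2=\nabla_{\nabla f}\nabla f$ and substituting the $(1,1)$-form of the soliton equation. Your bookkeeping is in fact slightly cleaner, since the paper's displayed computation tracks $2\nabla F_\Lambda$ while labeling it $\nabla F_\Lambda$ --- a harmless factor-of-two slip that does not affect the conclusion.
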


\begin{proof}
(1) is obvious from tracing the $q$-soliton equation.  For (2), consider the $(1,1)$ version of the soliton equation
\[ \nabla_X \nabla f = \lambda X + \frac{1}{2} Q(X).\]
Then
\begin{eqnarray*}
    \nabla F_{\Lambda} & =& \nabla |\nabla f|^2 - 2 \Lambda \nabla f \\&=&  2 \nabla_{\nabla f} \nabla f - 2 \Lambda \nabla f \\
    &=& 2 \lambda \nabla f + Q(\nabla f)  - 2 \Lambda \nabla f \\
    &=& Q(\nabla f) - 2 (\Lambda - \lambda)\nabla f
\end{eqnarray*}
which gives (2). 
\end{proof}

\begin{proof}[Proof of Theorem \ref{thm04}]
    Proposition \ref{Prop Q} implies that there is a $\Lambda$ such that  $F_{\Lambda}$ and $\Delta f$ are constant. Since $\sec(X,\nabla f)=0$ is equivalent to $R(X,\nabla f)\nabla f=0$, the proof follows from Theorem \ref{thm-rigid}.
\end{proof}

Now we turn our attention to proving the theorem, we first consider the case $\Lambda =0$ separately.  In this case, we can replace the sectional curvature assumption with a weaker 
Ricci curvature assumption. 

\begin{proposition} \label{prop-rigid-0}
    Let $(M,g)$ be a complete Riemannian manifold and $f$ a non-constant smooth function with $|\nabla f|$ constant and $\mathrm{Ric}(\nabla f, \nabla f) \geq 0$, then $M$ is isometric to $N \times \mathbb{R}$ and $f =ar+b$ where $r$ is the parameter in the $r$ direction. 
\end{proposition}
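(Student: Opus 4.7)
The plan is to first use the constancy of $|\nabla f|$ to identify $N:=\nabla f/a$ as a unit geodesic vector field normal to the level sets of $f$, then run a Riccati comparison on the shape operator of those level sets to force $\Delta f\equiv 0$, upgrade this to $\operatorname{Hess} f\equiv 0$ via Bochner's formula, and finally produce the splitting by flowing along $\nabla f$.

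Because $f$ is non-constant and $|\nabla f|$ is constant, $|\nabla f|\equiv a>0$. Differentiating $|\nabla f|^2=a^2$ in an arbitrary direction $X$ gives $\operatorname{Hess} f(\nabla f,X)=0$, so integral curves of $N$ are unit-speed geodesics and $f$ is linear along them. Each level set $\Sigma_c=f^{-1}(c)$ is then a smooth embedded hypersurface with unit normal $N$, Weingarten operator $A=\tfrac{1}{a}\operatorname{Hess} f|_{T\Sigma_c}$, and mean curvature $H=\operatorname{tr}(A)=\Delta f/a$.

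The crux will be showing $H\equiv 0$. Along any normal geodesic $\gamma(t)=\exp_p(tN)$, defined on all of $\mathbb{R}$ by completeness, the Riccati equation $A'+A^2+R(\cdot,\gamma')\gamma'=0$ together with Cauchy--Schwarz ($|A|^2\geq H^2/(n-1)$) and $\operatorname{Ric}(\nabla f,\nabla f)\geq 0$ yields
\[
H'\leq -\tfrac{H^2}{n-1}.
\]
Comparing $H$ with the separable ODE $v'=-v^2/(n-1)$, $v(t_0)=H(t_0)$, which blows up in finite backward time if $H(t_0)>0$ and in finite forward time if $H(t_0)<0$, forces $H$ to blow up too, contradicting the smoothness of $A$ on all of $\mathbb{R}$. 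Hence $H\equiv 0$, i.e.\ $\Delta f\equiv 0$ on $M$.

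Applying Bochner's formula to $|\nabla f|^2$ now collapses to $0=|\operatorname{Hess} f|^2+\operatorname{Ric}(\nabla f,\nabla f)$, so both terms vanish and $\nabla f$ is a nonzero parallel vector field. Its flow $\psi_t$ acts by global isometries satisfying $f\circ\psi_t=f+at$. Letting $\Sigma:=f^{-1}(0)$ (nonempty because $f$ is unbounded along the geodesic flow, and connected because $M$ is), the map $(p,t)\mapsto\psi_t(p)$ is a bijective isometry $\Sigma\times\mathbb{R}\to M$ that pulls $f$ back to $at+b$, yielding the desired splitting. The one delicate step is the Riccati blow-up: one must restrict to a maximal interval on which $H$ has fixed sign and carefully track the comparison solution to conclude the blow-up contradicts the smoothness of $A$ guaranteed by completeness and non-vanishing of $\nabla f$.
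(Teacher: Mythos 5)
Your proof is correct and follows essentially the same route as the paper: both reduce the problem to a Riccati inequality for $\Delta f$ (your $H=\Delta f/a$) along the geodesic integral curves of $\nabla f$, rule out nonzero values by comparison with the finite-time blow-up solutions using completeness, and then feed $\Delta f\equiv 0$ back into Bochner to get $\mathrm{Hess}\, f=0$ and the splitting. The only cosmetic difference is that you phrase the ODE via the shape operator of the level sets (with the sharper constant $n-1$) while the paper works directly from the Bochner formula; incidentally, the paper's displayed inequality $\phi'\le \phi^2/n$ has a sign typo, since the comparison solutions it actually uses solve $\overline{\phi}'=-\overline{\phi}^2/n$, matching your version.
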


\begin{proof}
    From the Bochner formula, 
    \[ \frac{1}{2} \Delta|\nabla f|^2 = \mathrm{Ric}(\nabla f, \nabla f) + |\mathrm{Hess} f|^2 + g(\nabla \Delta f, \nabla f). \]
    So our assumptions along with the Cauchy-Schwarz inequality gives that 
    \[ g(\nabla \Delta f, \nabla f) \leq -|\mathrm{Hess} f|^2 \leq - \frac{(\Delta f)^2}{n}. \]
    Let $\gamma(t)$ be an integral curve of $\nabla f$. Since $|\nabla f|$ is constant, $\gamma$ is a geodesic so by completeness, exists for all time $(-\infty, \infty)$.  Let $\phi(t) = \Delta f(\gamma(t))$. Then the equation above gives that $\phi$ satisfies the differential inquality 
    \begin{align} \label{eqn-Ricat}
      \phi'(t) \leq \frac{\phi^2}{n}.  
    \end{align} 
    The Ricatti comparison then allows us to bound $\phi$ in terms of a solution,  $\overline{\phi}(t)$ to the equation
    \[ \overline{\phi}' = \frac{\overline{\phi}^2}{n}. \]
     Namely, if 
    \[ G(t) = e^{ \int_{t_0}^{t} -\frac{1}{n}(\phi(s) + \overline{\phi}(s)) ds } \] then a simple calculation shows that the function $\frac{d}{dt} G(\phi-\overline{\phi})\leq 0$.  

    Since $G$ is a positive function, this implies that if $\phi(t_0) = \overline{\phi}(t_0) $ for some $t_0$ then $\phi(t) \leq \overline{\phi}(t)$ for all $t \geq t_0$ and $\phi(t) \geq \overline{\phi}(t)$ for all $0<t \leq t_0$.  The solutions $\overline{\phi}$ can be computed explicitly as 
    \[ \overline{\phi}(t) = 0 \quad \text{or} \quad \overline{\phi}(t)  = \frac{1}{a+ \frac{t}{n}} \]
    for a constant $a$.  In particular the functions $\overline{\phi}$ have the property that if $\overline{\phi}(t_0)<0$ then there is $t_1>t_0$ such that $\overline{\phi}(t) \to -\infty$ as $t\to t_1^-$, $t>t_0$.  The comparison principle then implies that if $\phi(t)<0$ at some point then $\phi$ must also go to $-\infty$ in finite time, a contradiction. Similarly, the functions $\overline{\phi}$ have the property that if $\overline{\phi}(t_0) > 0$, then there is $t_1< t_0$ such that $\overline{\phi}(t) \to \infty$ as $t \to t^+$.  The comparison principle then says that if $\phi(t)>0$ then $\phi$ would have to also go to $+\infty$ in finite time.  

    Putting this together shows that the only solution to the Ricatti differential inequality (\ref{eqn-Ricat}) that exists for all $t \in (-\infty, \infty)$ is the constant solution $\phi(t) = 0$.  Thus we obtain that $\Delta f=0$ on $M$.  Plugging back into the Bochner formula gives that $\mathrm{Hess} f  = 0$ which implies that $\nabla f$ is parallel and gives the desired splitting. 
\end{proof}

Now we consider the case $\Lambda \neq 0$.  First note that if 
\[ F_{\Lambda}= \frac{1}{2} |\nabla f|^2 - \Lambda f = c\]
then we can add a constant to $f$ in order to make $F_{\Lambda}=0$.  We can also assume that $\Lambda >0$ by taking $-f$ instead of $f$.   From the rest of this section we  assume that our function is normalized in this way, that is we have the equation
\begin{equation} \label{F=0}
    |\nabla f|^2 =  2\Lambda f,  \qquad \Lambda >0.
\end{equation}
Let  $C = \{ x:  \nabla f = 0 \}$ be the critical point set of $f$, by (\ref{F=0}) $C$ is also the zero set of $f$.  Note that we also have from the normalization that $f\geq 0$.

\begin{proposition}
If $(M,g)$ is a Riemannian manifold and $f$ is a function satisfying (\ref{F=0}) then the function $\rho = \sqrt{\frac{2f}{\Lambda}}$ defined on $M \setminus C$ is a distance function $|\nabla \rho|=1.$
\end{proposition}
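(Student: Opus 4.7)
The plan is simply to verify the identity $|\nabla \rho| = 1$ by direct computation, since all the relevant information is encoded in the normalization $|\nabla f|^2 = 2\Lambda f$.

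First I would note that on $M \setminus C$ we have $f > 0$, because the excerpt already observes that $C$ coincides with the zero set of $f$ (and $f \geq 0$ by the normalization). Consequently $\rho = \sqrt{2f/\Lambda}$ is a well-defined smooth function on $M \setminus C$, and the chain rule gives
\begin{equation*}
\nabla \rho = \sqrt{\tfrac{2}{\Lambda}} \cdot \frac{1}{2\sqrt{f}} \, \nabla f = \frac{1}{\sqrt{2\Lambda f}} \, \nabla f.
\end{equation*}

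Next I would take the squared norm and substitute the hypothesis $|\nabla f|^2 = 2\Lambda f$:
\begin{equation*}
|\nabla \rho|^2 = \frac{1}{2\Lambda f} \, |\nabla f|^2 = \frac{2\Lambda f}{2\Lambda f} = 1.
\end{equation*}
This shows that $\rho$ is a distance function on $M \setminus C$, as claimed. There is no real obstacle here; the only thing to be a bit careful about is that the manipulation takes place on $M \setminus C$, where $f$ is strictly positive and $\rho$ is smooth, so the square roots and their derivatives are unambiguous.
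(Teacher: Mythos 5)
Your computation is correct and is essentially identical to the paper's own proof: both differentiate $\rho=\sqrt{2f/\Lambda}$ by the chain rule and substitute $|\nabla f|^2=2\Lambda f$ to get $|\nabla\rho|^2=1$. Your added remark that $f>0$ on $M\setminus C$ (so the square root is smooth there) is a sensible precaution that the paper handles in the surrounding text rather than in the proof itself.
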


\begin{proof}
    \begin{align*}
        \nabla \rho &=  \frac{1}{2}\left(  \sqrt{\frac{2}{\Lambda f }} \right) \nabla f \\ \\ |\nabla \rho|^2  &= \frac{|\nabla f|^2}{2 \Lambda f} \\ \\
        &=  1.
         \end{align*}
\end{proof}

Now let $S_f$ be the $(1,1)$-tensor $S_f(X) = \nabla_X \nabla f$.  We have the following lemma about $S_f$ for a function that satisfies (\ref{F=0}). 

\begin{lemma} \label{radial-eqn}
Let $(M,g)$ be a Riemannian manifold and $f$ a function satisfying (\ref{F=0}) then 
\begin{align*}
S_f(\nabla f ) &= \Lambda \nabla f, \\ 
R(X, \nabla f) \nabla f &= -(\nabla_{\nabla f}S_f)(X) - S_f\circ(S_f-\Lambda I)(X).
\end{align*}
\end{lemma}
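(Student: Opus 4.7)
The plan is to obtain the first identity by differentiating the normalization (\ref{F=0}), and then to derive the second identity from the definition of the Riemann curvature tensor, using the first identity to simplify the radial term.

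First I would take the gradient of both sides of $|\nabla f|^2 = 2\Lambda f$. The left-hand side gives $2\,\nabla_{\nabla f}\nabla f = 2 S_f(\nabla f)$, while the right-hand side gives $2\Lambda \nabla f$, so $S_f(\nabla f) = \Lambda \nabla f$ immediately.

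For the second identity I would start from
\[
R(X,\nabla f)\nabla f = \nabla_X\nabla_{\nabla f}\nabla f - \nabla_{\nabla f}\nabla_X\nabla f - \nabla_{[X,\nabla f]}\nabla f.
\]
The first term simplifies using the identity just proved: $\nabla_{\nabla f}\nabla f = \Lambda \nabla f$, so $\nabla_X\nabla_{\nabla f}\nabla f = \Lambda S_f(X)$. The second term unpacks as $\nabla_{\nabla f}(S_f(X)) = (\nabla_{\nabla f} S_f)(X) + S_f(\nabla_{\nabla f} X)$. For the third term I would use the torsion-free property $[X,\nabla f] = \nabla_X \nabla f - \nabla_{\nabla f} X = S_f(X) - \nabla_{\nabla f}X$, so that $\nabla_{[X,\nabla f]}\nabla f = S_f(S_f(X)) - S_f(\nabla_{\nabla f}X)$.

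Assembling these three pieces, the $S_f(\nabla_{\nabla f}X)$ terms cancel and one is left with
\[
R(X,\nabla f)\nabla f = \Lambda S_f(X) - (\nabla_{\nabla f}S_f)(X) - S_f(S_f(X)),
\]
which is exactly $-(\nabla_{\nabla f}S_f)(X) - S_f\circ(S_f - \Lambda I)(X)$. There is no real obstacle here; the only thing to be careful about is the bookkeeping of the term $S_f(\nabla_{\nabla f} X)$, ensuring that the contributions from the second covariant derivative and from the Lie-bracket term cancel correctly so that the final expression is tensorial in $X$.
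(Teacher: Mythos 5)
Your proposal is correct and follows essentially the same route as the paper: the first identity comes from differentiating $|\nabla f|^2 = 2\Lambda f$, and the second is a direct expansion of the curvature tensor applied to $\nabla f$, with the radial term $\nabla_{\nabla f}\nabla f = \Lambda\nabla f$ feeding in. The paper organizes the computation through the tensorial second covariant derivative, writing $R(X,\nabla f)\nabla f = (\nabla_X S_f)(\nabla f) - (\nabla_{\nabla f}S_f)(X)$, whereas you expand the raw definition with the Lie-bracket term and cancel $S_f(\nabla_{\nabla f}X)$ by hand; these are the same calculation with different bookkeeping.
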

\begin{proof}
The first equation follows from (\ref{F=0}) since $S_f(\nabla f) = \nabla_{\nabla f} \nabla f = \frac{1}{2} \nabla |\nabla f|^2$. For the second equation we compute
\begin{align*} 
R(X,\nabla f)\nabla f & =\nabla^2_{X,\nabla f}\nabla f-\nabla^2_{\nabla f,X}\nabla f \\  &= \left(\nabla_XS_f\right)(\nabla f) - \left(\nabla_{\nabla f}S_f\right)(X)\\
&= \nabla_X (S_f(\nabla f)) - S_f(\nabla_X \nabla f) - \left(\nabla_{\nabla f}S_f\right)(X)\\
&= \Lambda S_f(X) - S_f \circ S_f(X) - \left(\nabla_{\nabla f}S_f\right)(X)\\
&= - S_f\circ(S_f-\Lambda I)(X) - \left(\nabla_{\nabla f}S_f\right)(X).
\end{align*}
\end{proof}

We now also consider the function $\rho$, and let $S_{\rho}$ be the $(1,1)$-tensor $S_{\rho}(X) = \nabla_{X} \nabla \rho$. Geometrically, $S_{\rho}$ is the shape operator of the level sets.   Since $\rho$ satisfies $|\nabla \rho | = 1$, $S_{\rho}$ satisfies the radial curvature equations
\begin{align*}
S_{\rho}( \nabla \rho) & = 0, \\
R(X, \nabla \rho) \nabla \rho &= - \left( \nabla_{\nabla \rho} S_{\rho}\right)(X) - S_{\rho} \circ S_{\rho}(X).
\end{align*}

Also, from $|\nabla \rho|=1$ it follows that the integral curves of $\rho$ are unit speed geodesics. Using standard techniques in Riemannian geometry, we have the following Ricatti equation for $S_{\rho}$ along its integral curve. 

\begin{lemma} \label{lem-ricatti}
Let $\gamma(\rho)$ be a unit speed geodesic such that $\gamma' = \nabla \rho$ and $R(X, \nabla \rho) \nabla \rho = 0$.  Let $E(\rho)$ be a unit length perpendicular parallel field along $\gamma$  that is an eigenvector for $S_{\rho}$ at some point $\gamma(\rho_0)$ and let $\phi(\rho)= g(S_{\rho}(E(\rho)), E(\rho)).$  Then $\phi$ satisfies the equation 
\[ \phi' = - \phi^2\]
and $E(\rho)$ is a eigenvector for $S_{\rho}$ for every point on the geodesic $\gamma$. 
\end{lemma}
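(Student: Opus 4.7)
The plan is to derive a first-order ODE for the vector field $Z(\rho) := S_\rho(E(\rho))$ along $\gamma$, compare it to the ansatz $Y(\rho) := \psi(\rho)E(\rho)$ where $\psi$ solves the scalar Riccati equation $\psi' = -\psi^2$ with the appropriate initial value, and use uniqueness for a linear ODE to conclude $Z \equiv Y$. This sidesteps the circularity of a naive direct differentiation of $\phi$, which only yields $\phi' = -|S_\rho E|^2$, coinciding with $-\phi^2$ solely after $E$ is known to be an eigenvector.

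First I would combine the radial curvature equation for $S_\rho$ stated just before the lemma with the hypothesis $R(X,\nabla\rho)\nabla\rho=0$ to obtain, along $\gamma$, the tensor Riccati equation $(\nabla_{\nabla\rho} S_\rho)(X) = -S_\rho^2(X)$ for every $X$ perpendicular to $\gamma'$. Since $E$ is parallel along $\gamma$, differentiating $Z = S_\rho(E)$ gives
\[ \nabla_{\nabla\rho} Z = (\nabla_{\nabla\rho} S_\rho)(E) + S_\rho(\nabla_{\nabla\rho} E) = -S_\rho^2(E) = -S_\rho(Z). \]

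Next, let $\mu$ denote the eigenvalue of $S_{\rho_0}$ on $E(\rho_0)$, so $Z(\rho_0) = \mu E(\rho_0)$, let $\psi$ solve $\psi'=-\psi^2$ with $\psi(\rho_0)=\mu$, and set $Y(\rho) := \psi(\rho)E(\rho)$. Since $E$ is parallel, $\nabla_{\nabla\rho} Y = \psi' E = -\psi^2 E$ and $Y(\rho_0) = Z(\rho_0)$. Using $S_\rho(Y) = \psi S_\rho(E) = \psi Z$, direct rearrangement yields
\[ \nabla_{\nabla\rho}(Z - Y) = -S_\rho(Z) + \psi^2 E = -S_\rho(Z - Y) - \psi(Z - Y), \]
a linear homogeneous ODE for $Z - Y$ with zero initial condition. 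Uniqueness forces $Z \equiv Y$, i.e.\ $S_\rho(E(\rho)) = \psi(\rho)E(\rho)$ for all $\rho$ on $\gamma$, so $E$ is an eigenvector of $S_\rho$ throughout; and since $|E|=1$, this gives $\phi(\rho) = g(S_\rho E, E) = \psi(\rho)$, hence $\phi' = -\phi^2$.

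I do not anticipate a serious technical obstacle. The delicate point is the algebraic rewriting in the third step: one must use $S_\rho(Y) = \psi Z$ rather than the (generally false) identity $S_\rho(Y) = \psi Y$ in order to close the equation for $Z - Y$ as a genuinely linear ODE without begging the question.
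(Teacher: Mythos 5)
Your proof is correct, and it takes a genuinely different route from the paper's. The paper argues by a two-sided inequality: differentiating $\phi$ directly gives $\phi' = -|S_\rho(E)|^2 \leq -\phi^2$ by Cauchy--Schwarz, and the reverse inequality is obtained by introducing a field $J$ with $J' = S_\rho(J)$, showing the normalized quantity $\psi = g(S_\rho(J),J)/|J|^2$ satisfies $\psi' \geq -\psi^2$, and then using that $R(\cdot,\nabla\rho)\nabla\rho = 0$ forces Jacobi fields to have the explicit form $F + \rho E$ with $E,F$ parallel, whence $\psi = \phi$; equality in Cauchy--Schwarz then yields the eigenvector claim. You instead read off the tensor Riccati equation $(\nabla_{\nabla\rho}S_\rho)(X) = -S_\rho^2(X)$ from the radial curvature identity, derive the closed linear equation $\nabla_{\nabla\rho}Z = -S_\rho(Z)$ for $Z = S_\rho(E)$, and compare with the ansatz $\psi E$ via uniqueness for the linear homogeneous ODE satisfied by $Z - \psi E$ (your algebraic identity $-S_\rho(Z-Y) - \psi(Z-Y) = -S_\rho(Z) + \psi^2 E$ checks out, and you correctly use $S_\rho(Y) = \psi Z$ rather than $\psi Y$). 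Your argument is shorter, bypasses Jacobi fields entirely, and delivers the eigenvector statement and the scalar Riccati equation in one stroke; its one limitation is that it leans on the exact vanishing of the radial curvature to close a linear ODE, whereas the paper's comparison-style argument is the template that survives when one only has curvature inequalities. A minor point you may wish to make explicit: the conclusion $Z \equiv \psi E$ a priori holds only on the maximal interval where $\psi$ exists, but since $|\psi| = |\phi| \leq |S_\rho(E)|$ there, $\psi$ cannot blow up while $S_\rho$ remains defined along $\gamma$, so the identity propagates to the whole geodesic.
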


\begin{proof}
While this proof uses standard techniques, we outline the proof for completeness.

First note that, by the radial curvature equation, we directly obtain 
\[ \phi' = \frac{d}{d\rho} g(S_\rho(E), E) = -|S(E)|^2 \leq -(g(S(E), E))^2  = - \phi^2,  \]
where the only place where we have used the inequality is in applying Cauchy-Schwarz.  Despite this, obtaining the opposite inequality takes a circuitous route. 

Now consider a vector field $J$ along $\gamma$ such that $[ \gamma', J]=0$.  This is equivalent to letting $J$ solve $J' = S_{\rho}(J)$, where $J'$ denotes covariant derivative along $\gamma.$

Now, $J$ won't be constant length so we need to consider the normalized function 
\[ \psi = \frac{ g(S_{\rho}(J), J )}{|J|^2}.  \]
Then by a calculation assuming $R(X, \nabla \rho) \nabla \rho = 0$  we have that $\psi$ satisfies
\[ \psi ' \geq - \psi^2, \]
where again the inequality comes from applying Cauchy-Schwarz.  

Now the lemma follows from observing that $\psi=\phi$.  To see this, note that taking a derivative of the defining equation for $J$ gives that $J$ is a Jacobi field along $\gamma$, as  
\[ J'' = \nabla_{\nabla \rho} (S_\rho(J)) = \nabla_{\nabla \rho} \nabla_J \nabla \rho = - R(J, \nabla \rho)\nabla \rho .\]
 On the other hand, $R(J ,\nabla \rho) \nabla \rho = 0$, implies that the Jacobi fields along $\gamma$ are of the form $J(\rho) = F + \rho E$ for some parallel fields $E$ and $F$.  Then,  if we choose $J$ to be the solution to  $J' = S_{\rho}(J)$ such that $J(\rho_0) = E(\rho_0)$ and $E(\rho_0)$ is an eigenvector of $S$, we obtain that $J$ is the Jacobi field with $J(\rho_0) = E(\rho_0)$ and $J'(\rho_0) = \Lambda_0 E (\rho_0)$, where $\Lambda_0$ is the corresponding eigenvalue.  In particular, the form of the Jacobi fields implies that $J(\rho)$ is a linear function times $E(\rho)$ for all $\rho$, by definition this implies $\phi = \psi$. 

 Now the Cauchy-Schwarz inequality above must be an equality, which shows that $E(\rho)$ is an eigenvector along all of $\gamma$.
\end{proof}
This equation now allows us to use a radial sectional curvature bound to control the Hessian of $f$. We are now ready to prove the main theorem of this section. 

\begin{proof}[Proof of Theorem \ref{thm-rigid}]

Proposition \ref{prop-rigid-0} implies the result when $\Lambda=0$.  When $\Lambda \neq 0$ we normalize so that we have (\ref{F=0}). 

On the minimum set, $C$, Lemma \ref{radial-eqn} shows that the only eigenvalues of $S_f$ are $0$ and $\Lambda$.  For points not on $C$, we can argue along a geodesic $\gamma$ with $\gamma'=\nabla \rho.$  Let $\phi_i(\rho)$ be the $i$-th eigenvalue of $S_{\rho}$ at $\gamma(\rho)$. Note that by Lemma \ref{radial-eqn} $\nabla f$ is an eigenvector of $S_f$ with eigenvalues $\Lambda$, so we only need consider vectors perpendicular to $\nabla f$.   By Lemma \ref{lem-ricatti}, $\phi_i' = -\phi_i^2$.  The solutions to this ODE are 
\[ \phi_i = \frac{1}{\rho-a_i} \quad \text{or} \quad \phi_i = 0\]
for a constant $a_i$. Since we require that $\phi_i$ exists for all $\rho>0$, this implies that $a_i<0$ for all $i$. 

On the other hand, from $f= \frac{\Lambda}{2} \rho^2$ we have that
\[ \mathrm{Hess} f = \Lambda d\rho \otimes d\rho + \Lambda \rho \mathrm{Hess} \rho.\]
So the eigenvalues of $S_f$ along $\gamma$ are of the form, 
\[ \frac{\Lambda \rho}{\rho-a_i} \quad \text{or} \quad 0. \]
Since $\rho>0$ and $a_i\leq 0$, the eigenvalues of $S_f$ are bounded between $0$ and $\Lambda$.  Moreover, if $a_i\neq 0$, the function $\frac{\Lambda \rho}{\rho-a_i}$ is $0$ at $\rho=0$ and increases to $\Lambda$ as $\rho \to \infty$. This would contradict that $\Delta f$ is constant. 

Therefore, we must have that the only eigenvalues of $S_f$ on all of $M$ are $0$ and $\Lambda$. In particular, $\mathrm{Hess} f \geq 0$ and $C$ is a totally convex set as if $p, q \in C$ and $\gamma$ is a geodesic connecting $p$ and $q$ then $\gamma$ must also lie on $C$. In particular, $C$ is connected.

$C$ is also the zero set of the map $p \to \nabla f(p)$, using normal coordinates, we can also compute that the kernel of this map is the null space of $S$.  Therefore, by the constant rank theorem, $C$ is a submanifold whose tangent space is the null space of $S$.  Then since $C$ is a totally convex set, it is a totally geodesic submanifold, and its second fundamental form vanishes. 

Now consider the normal exponential map to $C$,  $exp^{\perp}: \nu(C) \to M$.  Since $\rho$ is smooth when $\rho>0$, this map is a diffeomorphism.  The metric is then determined by the second fundamental form of $C$ and the Jacobi fields along integral curves of $\rho$.  Since the second fundamental form is zero and the assumption  $R(X, \nabla \rho)\nabla \rho=0$ determines the Jacobi fields,  we can see that the metric is isometric to a flat normal bundle. \end{proof}

\noindent

\section{Bounds of trace and rigidity of $q$-solitons}

Our first key tool is a Hamilton type identity which it was obtained by Hamilton \cite{Hamilton} for gradient Ricci solitons.

\begin{proposition}\label{Hamilton identity}  A complete gradient $q$-soliton satisfies \eqref{equality assumption} if, and only if,  \begin{equation}\label{IH}
|\nabla f|^2 - \frac{1}{2}\operatorname{tr}(q)-2\lambda f=C,
\end{equation}
where $C$ is a constant.
    
\end{proposition}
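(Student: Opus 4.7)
The plan is to prove this by a direct computation that essentially mirrors Proposition \ref{Prop Q}. The key observation is that after taking the gradient of the left-hand side of \eqref{IH}, the $q$-soliton equation will convert a Hessian term into $\tfrac{1}{2}Q(\nabla f)$, so the claim is constant if and only if $Q(\nabla f)$ equals $\tfrac{1}{2}\nabla\operatorname{tr}(q)$.

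More concretely, starting from the $(1,1)$-version of the gradient $q$-soliton equation,
\[
\nabla_X \nabla f = \lambda X + \tfrac{1}{2} Q(X),
\]
I would first compute
\[
\tfrac{1}{2}\nabla |\nabla f|^2 = \nabla_{\nabla f} \nabla f = \lambda \nabla f + \tfrac{1}{2} Q(\nabla f).
\]
Multiplying by $2$ and subtracting $2\lambda \nabla f = \nabla(2\lambda f)$ gives the key identity
\[
\nabla\bigl(|\nabla f|^2 - 2\lambda f\bigr) = Q(\nabla f).
\]

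From here both directions are immediate. For the forward direction, if \eqref{equality assumption} holds then the right-hand side equals $\tfrac{1}{2}\nabla\operatorname{tr}(q)$, so
\[
\nabla\bigl(|\nabla f|^2 - \tfrac{1}{2}\operatorname{tr}(q) - 2\lambda f\bigr) = 0,
\]
and connectedness of $M$ gives \eqref{IH}. Conversely, if \eqref{IH} holds, taking the gradient of the constant on the left shows that $Q(\nabla f) - \tfrac{1}{2}\nabla \operatorname{tr}(q) = 0$, which is \eqref{equality assumption}.

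There is no real obstacle here; the proof is a one-line consequence of the soliton equation once one writes $\tfrac{1}{2}\nabla|\nabla f|^2 = \nabla_{\nabla f}\nabla f$. The only point requiring mild care is the use of connectedness of $M$ (implicit in the word ``complete'') to conclude that a function with vanishing gradient is actually constant, rather than merely locally constant.
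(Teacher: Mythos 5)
Your proof is correct and follows essentially the same route as the paper's: both directions come from differentiating the quantity in \eqref{IH} and using the soliton equation to replace $\nabla_{\nabla f}\nabla f$ by $\lambda\nabla f + \tfrac{1}{2}Q(\nabla f)$, so that constancy is equivalent to $Q(\nabla f)=\tfrac{1}{2}\nabla\operatorname{tr}(q)$. The only cosmetic difference is that the paper writes the forward direction in index notation while you keep everything invariant, and you make the (harmless) connectedness point explicit.
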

\begin{proof}
Since we assume \eqref{equality assumption}, then it follows Hamilton's idea to have 
\begin{align*}
 \nabla_{i}\left( |\nabla f|^2 - \frac{1}{2}\operatorname{tr}(q)-2\lambda f  \right)&=2(\nabla_{i}\nabla_{j}f-\frac{1}{2} q_{ij}-\lambda g_{ij})\nabla_{j}f\\
&=0.
\end{align*}
Conversely, if holds \eqref{IH} then from soliton equation \eqref{grad q soliton}
\begin{eqnarray*}
    0 &=& \nabla_{i}\left( |\nabla f|^2 - \frac{1}{2}\operatorname{tr}(q)-2\lambda f  \right)\\
    &=& 2\nabla_{\nabla f}\nabla f-\frac{1}{2}\nabla\operatorname{tr}(q)-2\lambda \nabla f\\
    &=& Q(\nabla f)-\frac{1}{2}\nabla\operatorname{tr}(q).
\end{eqnarray*}
This conclude the proof.
\end{proof}

The second key lemma  provides a formula to the Laplacian of the trace of a gradient $q$-soliton to the $q$-flow. In particular it generalizes the classical Laplacian of the scalar curvature of gradient Ricci solitons.
\begin{lemma}\label{Laplacian of the trace}
Let $(M,g,f)$ be a complete gradient $q$-soliton satisfying \eqref{equality assumption}. Then
\begin{equation}\label{eq with trace}
\Delta \operatorname{tr}(q)=2\lambda \operatorname{tr}(q)+|q|^2+2(\mathrm{div}(q))(\nabla f),
\end{equation}
and
\begin{equation}\label{eq f-Laplac of trace}
\Delta_f \operatorname{tr}(q)=2\lambda\operatorname{tr}(q)+|q|^2+\langle 2\mathrm{div}(Q)-\nabla \operatorname{tr}(q),\nabla f\rangle.
\end{equation}
In particular, if $q$ is trace-free, then
    \begin{equation}\label{eq no trace}
        (\mathrm{div}(q))(\nabla f)+\frac{1}{2}|q|^2=0.
    \end{equation}
\end{lemma}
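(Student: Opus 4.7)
The plan is to compute $\Delta \operatorname{tr}(q)$ by differentiating Hamilton's identity (Proposition \ref{Hamilton identity}) and handling the resulting $\Delta |\nabla f|^2$ through the Bochner formula, replacing every ingredient with an expression in $q$, $f$ and $\lambda$. A key preliminary step is the auxiliary identity
\begin{equation*}
\mathrm{Ric}(\nabla f)\;=\;\frac{1}{2}\mathrm{div}(q)\;-\;\frac{1}{2}\nabla \operatorname{tr}(q),
\end{equation*}
which holds on any gradient $q$-soliton (independently of \eqref{equality assumption}). I would derive it by taking the divergence of the soliton equation $\nabla_i \nabla_j f = \lambda g_{ij} + \frac{1}{2} q_{ij}$ to obtain $\nabla^i \nabla_i \nabla_j f = \frac{1}{2}(\mathrm{div}\, q)_j$, then applying the standard Bochner--Weitzenb\"ock commutation $\Delta (\nabla f)_j = \nabla_j \Delta f + \mathrm{Ric}_{jl}\nabla^l f$ together with the traced soliton equation $\Delta f = n\lambda + \frac{1}{2}\operatorname{tr}(q)$.

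With this identity in hand, I would apply the Bochner formula $\frac{1}{2}\Delta|\nabla f|^2 = |\mathrm{Hess}\, f|^2 + \mathrm{Ric}(\nabla f, \nabla f) + \langle \nabla \Delta f, \nabla f \rangle$. Substituting $\mathrm{Hess}\, f = \lambda g + \frac{1}{2}q$ gives $|\mathrm{Hess}\, f|^2 = n\lambda^2 + \lambda \operatorname{tr}(q) + \frac{1}{4}|q|^2$; tracing the soliton equation yields $\langle \nabla \Delta f, \nabla f\rangle = \frac{1}{2}\langle \nabla \operatorname{tr}(q), \nabla f\rangle$; and the Ricci term is rewritten via the auxiliary identity above. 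The two $\langle \nabla \operatorname{tr}(q), \nabla f\rangle$ contributions cancel exactly, leaving
\begin{equation*}
\frac{1}{2}\Delta|\nabla f|^2 \;=\; n\lambda^2 + \lambda \operatorname{tr}(q) + \frac{1}{4}|q|^2 + \frac{1}{2}\mathrm{div}(q)(\nabla f).
\end{equation*}
On the other hand, differentiating Hamilton's identity \eqref{IH} directly gives $\Delta|\nabla f|^2 = \frac{1}{2}\Delta \operatorname{tr}(q) + 2\lambda \Delta f$. Equating the two and expanding $\Delta f = n\lambda + \frac{1}{2}\operatorname{tr}(q)$ makes the $n\lambda^2$ terms cancel and one $\lambda\operatorname{tr}(q)$ term balance, yielding exactly \eqref{eq with trace}.

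Identity \eqref{eq f-Laplac of trace} then follows immediately by subtracting $\langle \nabla f, \nabla\operatorname{tr}(q)\rangle$ from both sides of \eqref{eq with trace} and interpreting $\mathrm{div}(q)(\nabla f)$ as $\langle \mathrm{div}(Q), \nabla f\rangle$ via the musical isomorphism; and \eqref{eq no trace} is the specialization of \eqref{eq with trace} to $\operatorname{tr}(q)\equiv 0$, since the left-hand side vanishes. The only real obstacle is computational bookkeeping: tracking the several $n\lambda^2$, $\lambda\operatorname{tr}(q)$ and gradient-of-trace contributions, and spotting the two critical cancellations---the $\langle\nabla\operatorname{tr}(q),\nabla f\rangle$ terms disappearing inside Bochner thanks to the auxiliary Ricci identity, and then the $n\lambda^2$ and $\lambda \operatorname{tr}(q)$ terms balancing across Hamilton's identity---that collapse everything into the clean form stated.
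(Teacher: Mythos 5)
Your proof is correct and follows essentially the same route as the paper: differentiate Hamilton's identity, expand $\Delta|\nabla f|^2$ via the Bochner formula, and use the divergence of the soliton equation (the commutation $\mathrm{div}(\mathrm{Hess}\, f)=\nabla\Delta f+\mathrm{Ric}(\nabla f)$) to express the Ricci term through $\mathrm{div}(q)$. The only difference is organizational — you keep the $\langle\nabla\operatorname{tr}(q),\nabla f\rangle$ terms explicit and let them cancel, while the paper converts them to $q(\nabla f,\nabla f)$ via \eqref{equality assumption} before combining; the underlying computation is identical.
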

\begin{proof}
From Hamilton's identity \eqref{IH} we have 
    $$\frac{1}{4}\Delta \operatorname{tr}(q)=-\lambda\Delta f+\frac{1}{2}\Delta|\nabla f|^2.$$
 Using Bochner's formula and \eqref{equality assumption} we get
 \begin{eqnarray}\label{eq}
\nonumber\frac{1}{4}\Delta \operatorname{tr}(q) &=& -\lambda\Delta f+{\rm Ric}(\nabla f,\nabla f)+\langle\nabla f,\nabla\Delta f\rangle+|{\rm Hess}(f)|^2\\\nonumber
&=& -\lambda\Delta f+{\rm Ric}(\nabla f,\nabla f)+\frac{1}{2}\langle\nabla f,\nabla \operatorname{tr}(q)\rangle+|{\rm Hess}(f)|^2\\
&=& -\lambda\Delta f+{\rm Ric}(\nabla f,\nabla f)+q(\nabla f,\nabla f)+|{\rm Hess}(f)|^2.\\\nonumber
 \end{eqnarray}
Substituting  
$$|{\rm Hess}(f)|^2=\lambda^2n+\lambda \operatorname{tr}(q)+\frac{1}{4}|q|^2$$ in the above equation we obtain 
 \begin{eqnarray}\label{eq03}
\nonumber\frac{1}{4}\Delta \operatorname{tr}(q) &=& -\lambda ^2n-\frac{\lambda}{2}\operatorname{tr}(q)+{\rm Ric}(\nabla f,\nabla f)+q(\nabla f,\nabla f)+\lambda^2n+\lambda \operatorname{tr}(q)+\frac{1}{4}|q|^2\\\nonumber
&=& \frac{\lambda}{2}\operatorname{tr}(q)+\frac{1}{4}|q|^2+{\rm Ric}(\nabla f,\nabla f)+q(\nabla f,\nabla f).\\\nonumber
\end{eqnarray} 
On the other hand, taking the divergence of the soliton equation, along with  \eqref{equality assumption} gives us
\begin{eqnarray}\label{Ric and div}
  \nonumber(\mathrm{div}(q))(\nabla f)  &=& \langle 2\mathrm{div}({\rm Hess}(f)),\nabla f\rangle\\\nonumber
  &=& 2\langle {\rm Ric}(\nabla f)+\nabla\Delta f,\nabla f\rangle \\
  &=& 2{\rm Ric}(\nabla f,\nabla f)+\langle \nabla \operatorname{tr}(q),\nabla f\rangle\\
  &=& 2{\rm Ric}(\nabla f,\nabla f)+2q(\nabla f,\nabla f).\nonumber
\end{eqnarray}
Substituting this in the above equation we conclude the proof.
\end{proof}

We can also rewrite equation \eqref{eq f-Laplac of trace} as
\begin{equation}
    \Delta_f\operatorname{tr}(q)=\left|-q+\frac{1}{n}\operatorname{tr}(q)g\right|^2+\operatorname{tr}(q)\left(1+\frac{1}{n}\operatorname{tr}(q)\right)+\langle 2\mathrm{div}(Q)-\nabla \operatorname{tr}(q),\nabla f\rangle,
\end{equation}

\begin{remark}
    If we take $q=-2{\rm Ric}$ in the equation \eqref{eq f-Laplac of trace}, then we obtain the well known formula by to gradient Ricci solitons
    $$\Delta_fR=2\lambda R-2|{\rm Ric}|^2.$$ See \cite{WP}.
    \end{remark}

Now we can use these formulae to prove some of the results mentioned in the introduction. 
\eqref{equality assumption} we can prove Theorem \ref{thm01}. 
\begin{proof}[Proof of Theorem \ref{thm01}]
Tracing the soliton equation we have
    $$\Delta f=\lambda n+\frac{1}{2}\operatorname{tr}(q),$$
    and from Hamilton's identity we obtain that
    \begin{equation}
        \Delta f-|\nabla f|^2+2\lambda f=\lambda n-C.
    \end{equation}
    Thus, if $M$ is compact and $\lambda\leq0$ we can see that $f$ is a constant. 
\end{proof}

As a second consequence we can prove Theorem \ref{thm03}.
\begin{proof}[Proof of Theorem \ref{thm03}]
   Since $M$ is compact we have from the divergence theorem and equations \eqref{eq} and \eqref{eq03} that
$$0=\frac{1}{2}\int_M(\mathrm{div}(q))(\nabla f)+\int_M|{\rm Hess}f|^2.$$
Since $M$ is non-stationary, we conclude the proof.
\end{proof}

Before we prove the next result let us recall that a Riemannian manifold is said to be  {\em parabolic} if the unique subharmonic function $u$ on $M$ such that $\sup_Mu<\infty$ must be the constant functions (see \cite{grigoryan} for more details).  Now we can state the proof of Theorem \ref{thm02}. 

\begin{proof}[Proof of Theorem \ref{thm02}]
If $q$ is trace-free and divergence-free, then equation \eqref{eq no trace} implies the space is $q$-flat. If (ii) holds, we have from equation \eqref{Ric and div} that $(\mathrm{div}(q))(\nabla f) \geq0$, and again from \eqref{eq no trace} we obtain $q$-flatness. (iii) follows from equations \eqref{eq with trace} and \eqref{Ric and div}. Finally, if holds item (iv), we have from equation  \eqref{eq with trace} that
\begin{equation}\label{eq12}
    \frac{1}{2}\Delta\operatorname{tr}(q)=2\lambda\operatorname{tr}(q)+|q|^2\geq2\lambda\operatorname{tr}(q)\geq0,
\end{equation}
 since $\lambda<0$ and $\operatorname{tr}(q)\leq0$. Then $\operatorname{tr}(q)$ is a subharmonic function, and being $M$ parabolic we obtain constant trace, that is, from \eqref{eq12} we have $q$ is trace-free and $M$ must be $q$-flat.

On the other hand, if  $\text{Ric}\geq0$ we consider the cut-off function $\phi_r\in C_0^\infty(B(x_0,2r))$ for $r>0$, such that
\begin{eqnarray}\label{prob 0}\left\{ \begin{array}{lllll}
0\leq\phi_r\leq1 & {\rm in}\,\,B(x_0,2r)\\
\,\,\,\,\,\phi_r=1 & {\rm in}\,\, B(x_0,r)\\
\,\,\,\,\,|\nabla\phi_r|^2\leq\dfrac{C}{r^2} & {\rm in}\,\,B(x_0,2r)\\
\,\,\,\,\,\Delta\phi_r\leq\dfrac{C}{r^2} & {\rm in}\,\,B(x_0,2r),\\
\end{array}\right.
\end{eqnarray}
where $C>0$ is a constant. Its existence is well known for manifolds with nonnegative Ricci curvature(see \cite[Theorem 2.2]{Batu}). Thus, from equation \eqref{eq12} we have
$$0\leq\int_{B_{2r}}2\lambda\operatorname{tr}(q)\phi_r\leq\frac{1}{2}\int_{B_{2r}}\phi_r\Delta\operatorname{tr}(q)=\frac{1}{2}\int_{B_{2r}}\operatorname{tr}(q)\Delta\phi_r\leq\frac{1}{2}\int_{{B_{2r}}\diagdown B_r}\frac{C}{r^2}\operatorname{tr}(q)\leq0.$$
Therefore, $\operatorname{tr}(q)=0$, and from \eqref{eq12} we conclude $q$-flat.

\end{proof}

Observe that a result with a slightly different assumption than  (i) of Theorem \ref{thm02} has been considered by first author and Griffin \cite[Theorem 2.6]{Cunha and Griffin}. In the same work the authors showed many other results by considering non-positive Ricci curvature for complete gradient $q$-solitons, while the itens (ii) and (iii) in the we have been considered non-negative Ricci curvature.

 Lemma \ref{Laplacian of the trace} shows that any steady gradient $q$-soliton such that $q$ has constant trace must be $q$-flat provided it satisfies the condition $(\mathrm{div}(q))(\nabla f) \geq0$. The next result give us an estimate of the trace of $q$ in both cases expanding and shrinking.

 \begin{proposition}\label{Prop01}
 Consider a gradient $q$-soliton satisfying   
 \eqref{equality assumption}  such that $q$ is constant trace and  $(\mathrm{div}(q))(\nabla f) \geq0$, $\lambda\not=0$. If $\lambda>0$ then $-2\lambda n\leq \operatorname{tr}(q)\leq0$, and if $\lambda<0$ then $0\leq \operatorname{tr}(q)\leq-2\lambda n$. Furthermore, if $\operatorname{tr}(q)=0$ then $M$ must be $q$-flat, and if $\operatorname{tr}(q)=-2\lambda n$ then $q=-2\lambda g$.
 \end{proposition}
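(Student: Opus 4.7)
The plan is to specialize equation \eqref{eq with trace} from Lemma \ref{Laplacian of the trace} to the setting at hand and then combine it with the Cauchy--Schwarz inequality $|q|^2 \geq \tfrac{1}{n}(\operatorname{tr}(q))^2$. Since $\operatorname{tr}(q)$ is assumed constant, $\Delta \operatorname{tr}(q) = 0$, so \eqref{eq with trace} reduces to
\begin{equation*}
0 = 2\lambda \operatorname{tr}(q) + |q|^2 + 2(\operatorname{div}(q))(\nabla f).
\end{equation*}
Using the hypothesis $(\operatorname{div}(q))(\nabla f) \geq 0$, this rearranges to
\begin{equation*}
-2\lambda\operatorname{tr}(q) \;=\; |q|^2 + 2(\operatorname{div}(q))(\nabla f) \;\geq\; |q|^2 \;\geq\; 0,
\end{equation*}
which already shows that $\lambda \operatorname{tr}(q)\leq 0$, giving the upper bound $\operatorname{tr}(q)\leq 0$ when $\lambda>0$ and the lower bound $\operatorname{tr}(q)\geq 0$ when $\lambda<0$.

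Next, I would invoke Cauchy--Schwarz in the form $|q|^2 \geq \tfrac{(\operatorname{tr}(q))^2}{n}$ to strengthen the above chain into
\begin{equation*}
-2\lambda \operatorname{tr}(q) \;\geq\; \frac{(\operatorname{tr}(q))^2}{n},
\end{equation*}
i.e., $\operatorname{tr}(q)\bigl(\operatorname{tr}(q)+2\lambda n\bigr)\leq 0$. Splitting into the two sign cases for $\lambda$ (and using the sign of $\operatorname{tr}(q)$ already obtained) immediately yields the two-sided bounds $-2\lambda n \leq \operatorname{tr}(q)\leq 0$ and $0\leq \operatorname{tr}(q)\leq -2\lambda n$, respectively.

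For the rigidity statements, I would just track the equality cases in the chain above. If $\operatorname{tr}(q)=0$, then $-2\lambda\operatorname{tr}(q)=0$ forces $|q|^2=0$, hence $q\equiv 0$, so $M$ is $q$-flat. If instead $\operatorname{tr}(q)=-2\lambda n$, then
\begin{equation*}
|q|^2 \;\leq\; -2\lambda\operatorname{tr}(q) \;=\; 4\lambda^2 n \;=\; \frac{(\operatorname{tr}(q))^2}{n} \;\leq\; |q|^2,
\end{equation*}
so equality holds throughout in Cauchy--Schwarz; this forces $q$ to be a scalar multiple of $g$, and matching traces gives $q=-2\lambda g$.

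There is no real obstacle here: everything follows from \eqref{eq with trace}, the sign hypothesis on $(\operatorname{div}(q))(\nabla f)$, and Cauchy--Schwarz. The only point that deserves a little care is recording the equality case of Cauchy--Schwarz correctly (that $|q|^2 = \tfrac{1}{n}(\operatorname{tr}(q))^2$ with $q$ symmetric implies $q$ is proportional to $g$), which is standard.
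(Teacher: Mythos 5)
Your proposal is correct and follows essentially the same route as the paper: specialize \eqref{eq with trace} using constant trace and the sign hypothesis on $(\mathrm{div}(q))(\nabla f)$ to get $2\lambda\operatorname{tr}(q)+|q|^2\leq 0$, apply Cauchy--Schwarz to obtain $\operatorname{tr}(q)\bigl(\operatorname{tr}(q)+2\lambda n\bigr)\leq 0$, and track the equality cases. You actually record the equality-case analysis (forcing $q=0$ or $q$ proportional to $g$) a bit more explicitly than the paper does.
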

 \begin{proof}
 From \eqref{eq with trace} we have
     $$2\lambda\operatorname{tr}(q)+|q|^2\leq0.$$
 So, by the Cauchy-Schwarz inequality
 we have 
 $$-2\lambda\operatorname{tr}(q)\geq|q|^2\geq\frac{1}{n}(\operatorname{tr}(q))^2,$$
 i.e.,
 $$\operatorname{tr}(q)^2+2\lambda n\operatorname{tr}(q)\leq0.$$
This imply that, $-2\lambda n\leq \operatorname{tr}(q)\leq0$ if $\lambda>0$ and $0\leq \operatorname{tr}(q)\leq-2\lambda n$ if $\lambda<0$.

 Finally, if $\operatorname{tr}(q)=-2\lambda n$, then $$q=\frac{\operatorname{tr}(q)}{n}g=-2\lambda g.$$
 The case $\operatorname{tr}(q)=0$ is trivial.
 \end{proof}

 In particular, if $q=-2{\rm Ric}$, we obtain Propositon 3.3 of \cite{WP} for gradient Ricci solitons. 

\begin{corollary}[Wylie-Petersen]
 Let $(M,g,f)$ be a gradient Ricci soliton with constant scalar curvature and $\lambda\not=0$. If $\lambda>0$ then $0\leq R \leq\lambda n$ and if $\lambda<0$ then $\lambda n\leq R \leq0$. In either case the metric is Einstein when the scalar curvature equals either of the extreme values.
 \end{corollary}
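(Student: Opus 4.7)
The plan is to obtain this corollary as a direct translation of Proposition \ref{Prop01} via the substitution $q=-2\,\mathrm{Ric}$. The first step is to verify that the hypotheses of Proposition \ref{Prop01} are met in the Ricci-soliton setting. With this choice of $q$, the soliton equation $\mathrm{Ric}+\mathrm{Hess}\,f=\lambda g$ becomes exactly $\mathrm{Hess}\,f=\lambda g+\tfrac{1}{2}q$. The trace is $\operatorname{tr}(q)=-2R$, so the assumption of constant scalar curvature in the corollary is equivalent to $q$ having constant trace.

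Next I would check that the Hamilton-type identity \eqref{equality assumption} and the sign condition $(\mathrm{div}(q))(\nabla f)\geq 0$ hold automatically. The contracted second Bianchi identity gives $\mathrm{div}(\mathrm{Ric})=\tfrac{1}{2}\nabla R$, so $\mathrm{div}(q)=-\nabla R$, which vanishes under constant scalar curvature. Consequently $(\mathrm{div}(q))(\nabla f)=0\geq 0$, and the standard Ricci-soliton identity $\mathrm{Ric}(\nabla f)=\tfrac{1}{2}\nabla R$ translates into $Q(\nabla f)=-2\,\mathrm{Ric}(\nabla f)=-\nabla R=\tfrac{1}{2}\nabla\operatorname{tr}(q)$, which is precisely \eqref{equality assumption}.

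With the hypotheses verified, Proposition \ref{Prop01} applies. Writing its conclusion in terms of $R=-\tfrac{1}{2}\operatorname{tr}(q)$: the bound $-2\lambda n\leq\operatorname{tr}(q)\leq 0$ for $\lambda>0$ becomes $0\leq R\leq\lambda n$, and the bound $0\leq\operatorname{tr}(q)\leq -2\lambda n$ for $\lambda<0$ becomes $\lambda n\leq R\leq 0$, matching the statement.

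Finally, for the equality cases I would translate the two rigidity conclusions of Proposition \ref{Prop01}. The case $\operatorname{tr}(q)=0$ corresponds to $R=0$ and $q$-flatness, i.e.\ $\mathrm{Ric}=0$, which is Einstein (with Einstein constant $0$). The case $\operatorname{tr}(q)=-2\lambda n$ gives $R=\lambda n$ and $q=-2\lambda g$, i.e.\ $\mathrm{Ric}=\lambda g$, again Einstein. Since every step is essentially a rewriting, there is no real technical obstacle; the only point that needs care is the bookkeeping of signs when passing between $q=-2\,\mathrm{Ric}$ and the scalar curvature.
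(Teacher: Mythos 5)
Your proposal is correct and follows exactly the paper's route: the paper obtains this corollary simply by setting $q=-2\,\mathrm{Ric}$ in Proposition \ref{Prop01}, and your verification of the hypotheses (constant trace, $\mathrm{div}(q)(\nabla f)=0$ via the contracted Bianchi identity, and \eqref{equality assumption} from $\mathrm{Ric}(\nabla f)=\tfrac{1}{2}\nabla R$) together with the sign bookkeeping $R=-\tfrac{1}{2}\operatorname{tr}(q)$ is precisely the intended, if unstated, argument.
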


 We also have a weaker condition than constant trace to prove the following result on the sign of the trace $\operatorname{tr}(q)$
of a gradient $q$-soliton depending on some asymptotic behavior of it (a similar result was obtained in \cite[Theorem 1]{ML} for gradient Yamabe solitons).

\begin{proposition}
 Let $(M, g, f)$ be a complete and non-compact gradient $q$-soliton with $\lambda\geq0$. Assume that $\liminf_{x\to\infty}(-\frac{1}{2}\operatorname{tr}(q)(x))\geq0$ and $(\mathrm{div}(q))(\nabla f) \geq0$. Then the trace of $q$ is non-positive.
\end{proposition}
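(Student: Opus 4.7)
The plan is to reduce the claim to a straightforward minimum principle argument via Lemma \ref{Laplacian of the trace}. Setting $u := -\tfrac{1}{2}\operatorname{tr}(q)$, equation \eqref{eq with trace} rearranges to
\[
\Delta u \;=\; 2\lambda u - \tfrac{1}{2}|q|^2 - (\mathrm{div}(q))(\nabla f) \;\leq\; 2\lambda u,
\]
where the inequality uses the hypothesis $(\mathrm{div}(q))(\nabla f)\geq 0$ together with $|q|^2\geq 0$. Thus the conclusion $\operatorname{tr}(q)\leq 0$ becomes $u\geq 0$ for a subsolution of $\Delta u\leq 2\lambda u$ satisfying $\liminf_{x\to\infty} u(x)\geq 0$.

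Next I would argue by contradiction. Suppose $u(p)<0$ at some $p\in M$ and set $m:=\inf_M u$. The hypothesis $\liminf_{x\to\infty} u(x)\geq 0$ means that for every $\varepsilon>0$ the sublevel set $\{u\leq -\varepsilon\}$ is contained in some compact subset of $M$. Applying this with $\varepsilon=|u(p)|/2$ and using continuity of $u$, the infimum $m$ is a finite negative number attained at an interior point $x_0\in M$. At such a minimum we have $\Delta u(x_0)\geq 0$ by the standard minimum principle, while the differential inequality forces $\Delta u(x_0)\leq 2\lambda u(x_0)\leq 0$ since $\lambda\geq 0$ and $u(x_0)=m<0$.

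If $\lambda>0$, this chain forces $u(x_0)=0$, contradicting $u(x_0)<0$. If $\lambda=0$, the inequality becomes $\Delta u\leq 0$ on all of $M$, so $u$ is superharmonic and attains its minimum at the interior point $x_0$; the strong maximum principle then makes $u$ constant on the connected manifold $M$, which contradicts $\liminf_{x\to\infty} u\geq 0 > u(x_0)$. I do not foresee a real obstacle: the only step deserving attention is verifying that the infimum is genuinely attained, and this is handled cleanly by the decay assumption at infinity. Beyond that, the proof is a routine application of the (strong) minimum principle for the linear elliptic inequality $\Delta u\leq 2\lambda u$.
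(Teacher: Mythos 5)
Your proof is correct and follows essentially the same route as the paper: both apply the formula for $\Delta\operatorname{tr}(q)$ from Lemma \ref{Laplacian of the trace} at an interior minimum of $-\tfrac{1}{2}\operatorname{tr}(q)$, whose existence is guaranteed by the $\liminf$ hypothesis. The only cosmetic difference is in the borderline case $\lambda=0$: you discard the $|q|^2$ term and invoke the strong minimum principle for the superharmonic function $u$, whereas the paper retains $|q|^2$ (which is strictly positive wherever $\operatorname{tr}(q)>0$) to get a pointwise contradiction directly.
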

\begin{proof}
    Consider the function $G(x)=-\frac{1}{2}\operatorname{tr}(q)(x)$. Assume that $\inf_M G(x)<0$. Since $\liminf_{x\to\infty}G(x)\geq0$ we know that there is some point $x_0$ such that $G(x_0)=\inf_MG(x)<0$. Hence 
    $$\Delta G(x_0)\geq0,\,\,\,\nabla G(x_0)=0.$$
  So that, in this point $x_0$, from equation \eqref{eq with trace} we obtain
  $$0\geq\Delta\operatorname{tr}(q)\geq 2\lambda\operatorname{tr}(q)+|q|^2\geq2\lambda\operatorname{tr}(q).$$
  This is an absurd since $\lambda\operatorname{tr}(q)(x_0)>0$ for $\lambda\geq0$. Therefore, the strong maximum principle implies that either $G(x)>0$ or $G(x)=0$ on $M$.
\end{proof}

We finish this section by also  recalling the variation of scalar curvature of a $q$-flow which is similar to the equations above (see \cite[Lemma 2.8]{B Chow}).
\begin{lemma}
Let $(M,g(t))$ be a one-parameter
family of metrics satisfying a $q$-flow equation. Then
\begin{equation}\label{evaluation of R}
    \frac{\partial}{\partial t}R=-\Delta \operatorname{tr}(q)+\mathrm{div}(\mathrm{div}(q))-\langle q, {\rm Ric}\rangle.
\end{equation}
\end{lemma}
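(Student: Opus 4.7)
The plan is to apply the standard variation formulas for the curvature tensors under an arbitrary deformation $\partial_t g = h$ of the metric, and then specialize to $h = q$. Since the result is essentially a textbook calculation that is being recorded here (cf.\ \cite{B Chow}), only careful bookkeeping with covariant derivatives is required.

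First I would compute the variation of the Christoffel symbols. Working in normal coordinates at a point, where partial derivatives coincide with covariant derivatives, differentiating the defining formula for $\Gamma^k_{ij}$ in $t$ gives
\begin{equation*}
\partial_t \Gamma^k_{ij} \;=\; \tfrac{1}{2}\, g^{kl}\bigl(\nabla_i q_{jl} + \nabla_j q_{il} - \nabla_l q_{ij}\bigr).
\end{equation*}
Feeding this into the Palatini-type expression $R_{ij} = \partial_k \Gamma^k_{ij} - \partial_i \Gamma^k_{kj}$ (the quadratic-in-$\Gamma$ terms vanish at the chosen point) yields
\begin{equation*}
\partial_t R_{ij} \;=\; \tfrac{1}{2}\bigl(\nabla^k\nabla_i q_{jk} + \nabla^k\nabla_j q_{ik} - \Delta q_{ij} - \nabla_i \nabla_j \operatorname{tr}(q)\bigr).
\end{equation*}

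Next I would compute $\partial_t R$ from $R = g^{ij} R_{ij}$. Using $\partial_t g^{ij} = -q^{ij}$,
\begin{equation*}
\partial_t R \;=\; -\langle q, {\rm Ric}\rangle + g^{ij}\,\partial_t R_{ij}.
\end{equation*}
Tracing the expression for $\partial_t R_{ij}$ with $g^{ij}$ and exploiting the symmetry of $q$, both mixed second-derivative terms collapse to $\mathrm{div}(\mathrm{div}(q))$ and the two $\Delta$ contributions combine to $-\Delta \operatorname{tr}(q)$. Substituting produces exactly the claimed identity.

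As this is a direct tensor calculation there is no genuine obstacle, but the step requiring the most care is the tracing of $\partial_t R_{ij}$: one must observe that, because $q_{jk}=q_{kj}$, the contraction $g^{ij}\nabla^k\nabla_i q_{jk}$ rewrites as $\nabla^k(\mathrm{div}(q))_k = \mathrm{div}(\mathrm{div}(q))$ without introducing any Ricci-commutator terms. A useful sanity check on signs is to specialize $q=-2\,{\rm Ric}$ and verify, via the contracted second Bianchi identity, that the formula reduces to the familiar Ricci flow evolution $\partial_t R = \Delta R + 2|{\rm Ric}|^2$, which matches the Remark following Lemma \ref{Laplacian of the trace}.
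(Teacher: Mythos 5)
Your computation is correct and is the standard derivation of the linearization of scalar curvature; the paper offers no proof of this lemma at all, only the citation to \cite[Lemma 2.8]{B Chow}, so your argument supplies exactly what that reference contains and there is nothing different to compare against. One small caveat on your sanity check: the specialization $q=-2{\rm Ric}$ does give $\partial_t R=\Delta R+2|{\rm Ric}|^2$ as you say (the paper's own remark immediately after this lemma prints $\Delta R-2|{\rm Ric}|^2$, which is a sign typo there), and the remark following Lemma \ref{Laplacian of the trace} that you point to is the soliton identity $\Delta_f R=2\lambda R-2|{\rm Ric}|^2$, a different statement, so the cross-reference should be adjusted even though your formula is the right one.
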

\begin{remark}
{\em 
   Observe that if $X=\nabla f$ is a conformal vector field in a gradient $q$-soliton, then from \cite{Cunha and Griffin} we have $n \mathrm{div}(q)=\nabla \operatorname{tr}(q)$. That is, $n \mathrm{div}(\mathrm{div}(q))=\Delta\operatorname{tr}(q)$. Therefore, from above lemma we obtain
$$\frac{\partial}{\partial t}R=-\frac{n-1}{n}\Delta \operatorname{tr}(q)-\langle q, {\rm Ric}\rangle.$$ }
\end{remark}

Also, observe that if we take $q=-2{\rm Ric}$ in \eqref{evaluation of R} and using the contracted second Bianchi
identity we get the known evolution equation for the scalar curvature of the Ricci flow
$$\frac{\partial}{\partial t}R=\Delta R-2|{\rm Ric}|^2.$$

When we consider a self-similar gradient $q$-soliton solution to the $q$-flow,  we can state the evolution equation of the trace of the tensor $q$.
\begin{lemma}\label{evolution trace}
  If $(M,g,f)$  is a gradient $q$-soliton to the $q$-flow, then 
\begin{equation}\label{evolution of trace}
\frac{\partial}{\partial t}\operatorname{tr}(q)=-|q|^2-2\lambda\operatorname{tr}(q)-g^{kl}\left[2(\mathrm{div}(q))_l-\nabla_l\operatorname{tr}(q)\right]\nabla_kf.
\end{equation}
\begin{proof}
From soliton equation we have
\begin{equation}\label{eq 5}
\frac{1}{2}\frac{\partial}{\partial t}\operatorname{tr}(q)=\frac{\partial}{\partial t}\Delta f=\frac{\partial}{\partial t}(g^{ij}\nabla_i\nabla_jf)=(\frac{\partial}{\partial t}g^{ij})\nabla_i\nabla_jf+g^{ij}\frac{\partial}{\partial t}(\nabla_i\nabla_jf).
\end{equation}
Since $\frac{\partial}{\partial t}g^{ij}=-g^{ik}g^{jl}\frac{\partial}{\partial t}g_{kl}$, we have $\frac{\partial}{\partial t}g^{ij}=-q_{ij}$. The second term on the right is given by
$$g^{ij}\frac{\partial}{\partial t}(\nabla_i\nabla_jf)=g^{ij}\frac{\partial}{\partial t}\left(\frac{\partial^2f}{\partial x^i\partial x^j}-\Gamma_{ij}^k\nabla_kf\right)=-g^{ij}\left(\frac{\partial}{\partial t}\Gamma_{ij}^k\right)\nabla_kf.$$
 On the other hand it is known that the variation of Christoffel symbols (see \cite{B Chow}) is given by
$$\frac{\partial}{\partial t}\Gamma_{ij}^k=\frac{1}{2}g^{kl}\left(\nabla_iq_{jl}+\nabla_jq_{il}-\nabla_lq_{ij}\right)$$
    for any a one-parameter
family metric satisfying $q$-flow equation. Therefore equation \eqref{eq 5} together soliton equation becomes
\begin{eqnarray*}
   \frac{1}{2}\frac{\partial}{\partial t}\operatorname{tr}(q) &=&-q_{ij}\cdot\nabla_i\nabla_jf-\frac{1}{2}g^{ij}g^{kl}\left(\nabla_iq_{jl}+\nabla_jq_{il}-\nabla_lq_{ij}\right)\nabla_kf\\
   &=& -\frac{1}{2}|q|^2-\lambda \operatorname{tr}(q)-\frac{1}{2}g^{ij}g^{kl}\left(\nabla_iq_{jl}+\nabla_jq_{il}-\nabla_lq_{ij}\right)\nabla_kf.
\end{eqnarray*}
We conclude the proof using that $(\mathrm{div}(q))_l=g^{ij}\nabla_iq_{jl}$.
\end{proof}
\end{lemma}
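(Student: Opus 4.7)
The plan is to start from the traced soliton equation, which gives $\operatorname{tr}(q) = 2\Delta f - 2\lambda n$, differentiate in $t$, and manage the two sources of $t$-dependence in the expression $\Delta f = g^{ij}\nabla_i\nabla_j f$: the inverse metric $g^{ij}$ and the Christoffel symbols hidden inside $\nabla_i\nabla_j$. Treating $f$ as static (as is standard for the self-similar solution generated by the soliton equation), we write
\begin{equation*}
\tfrac{1}{2}\tfrac{\partial}{\partial t}\operatorname{tr}(q)=\tfrac{\partial}{\partial t}\Delta f = \bigl(\tfrac{\partial}{\partial t}g^{ij}\bigr)\nabla_i\nabla_j f + g^{ij}\bigl(\tfrac{\partial}{\partial t}\nabla_i\nabla_j f\bigr).
\end{equation*}

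For the first term I would use the standard identity $\tfrac{\partial}{\partial t}g^{ij}=-g^{ik}g^{jl}q_{kl}$ together with the soliton equation $\nabla_i\nabla_j f = \lambda g_{ij}+\tfrac{1}{2}q_{ij}$, producing $-\lambda\operatorname{tr}(q)-\tfrac{1}{2}|q|^2$. For the second term, the only $t$-dependence of $\nabla_i\nabla_j f=\partial_i\partial_j f-\Gamma_{ij}^k\partial_k f$ comes from the Christoffel symbols, so I would invoke the well known variation formula
\begin{equation*}
\tfrac{\partial}{\partial t}\Gamma_{ij}^k=\tfrac{1}{2}g^{kl}\bigl(\nabla_i q_{jl}+\nabla_j q_{il}-\nabla_l q_{ij}\bigr),
\end{equation*}
available in any reference on geometric flows (e.g.\ Chow--Knopf).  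This contributes
\begin{equation*}
-g^{ij}\bigl(\tfrac{\partial}{\partial t}\Gamma_{ij}^k\bigr)\nabla_k f = -\tfrac{1}{2}g^{ij}g^{kl}\bigl(\nabla_i q_{jl}+\nabla_j q_{il}-\nabla_l q_{ij}\bigr)\nabla_k f.
\end{equation*}

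Adding the two contributions and recognizing $g^{ij}\nabla_i q_{jl}=(\mathrm{div}(q))_l$ (and the symmetry of $q$ to combine the first two terms) gives
\begin{equation*}
\tfrac{1}{2}\tfrac{\partial}{\partial t}\operatorname{tr}(q)=-\lambda\operatorname{tr}(q)-\tfrac{1}{2}|q|^2-\tfrac{1}{2}g^{kl}\bigl(2(\mathrm{div}(q))_l-\nabla_l\operatorname{tr}(q)\bigr)\nabla_k f,
\end{equation*}
which is the claimed formula after multiplying by $2$. The main obstacle I anticipate is purely bookkeeping: getting the right signs and factors when passing from the variation of $\Gamma_{ij}^k$ to the divergence and gradient of $\operatorname{tr}(q)$, and remembering to use the symmetry $q_{ij}=q_{ji}$ to pair the first two terms of the Christoffel variation. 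No curvature identities are needed beyond the soliton equation itself.
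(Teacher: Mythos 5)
Your proposal is correct and follows essentially the same route as the paper: differentiate the traced soliton equation, split $\frac{\partial}{\partial t}(g^{ij}\nabla_i\nabla_j f)$ into the variation of the inverse metric (handled via the soliton equation, yielding $-\lambda\operatorname{tr}(q)-\tfrac{1}{2}|q|^2$) and the variation of the Christoffel symbols (yielding the divergence and trace-gradient terms). No substantive difference from the paper's argument.
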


\begin{remark} Note that, from equations \eqref{evolution of trace} and \eqref{eq f-Laplac of trace} we can also  rewrite the evolution of the trace as
\begin{equation}
    \frac{\partial}{\partial t}\operatorname{tr}(q)=-\Delta_f\operatorname{tr}(q).
\end{equation}
\end{remark}

\begin{remark}
{\em 
 Taking $q=-2{\rm Ric}$ in the above lemma we obtain the evolution of the scalar curvature of a gradient Ricci soliton.
 $$\frac{\partial}{\partial t}R=-2\lambda R+2|{\rm Ric}|^2.$$}
\end{remark}

\section{Bounds of potential function and volume growth estimate}

We start this section by investigating  an upper estimate for the potential function. We assume \eqref{IH}
$$|\nabla f|^2 - \frac{1}{2}\operatorname{tr}(q)-2\lambda f=C.$$
As above, let  $F=\frac{1}{2}|\nabla f|^2-\lambda f$, and observe that $F$ is bounded above if and only if $\operatorname{tr}(q)$ is bounded above.  By adding a constant to $f$  we can assume that
$$\frac{1}{2}|\nabla f|^2-\lambda f\leq 0.$$
From now on,  we will consider this normalization on $f$. Now we have the following Lemma. 

\begin{lemma}\label{lema estimates}
Let $\left(M,g\right)$ be a complete Riemannian manifold such that 
$F\leq  C$ for some smooth function $f$ and $\lambda>0$.  Then the function $f$ satisfies the following:
\begin{align} \label{f estimate}
f(x)\leq & \frac{\lambda}{2}\left(r(x)+2\sqrt{\frac{f(x_0)}{2\lambda}}\right)^2, \\ \label{grad estimate}
|\nabla f|(x) \leq& \lambda r(x) + \sqrt{2\lambda f(x_0)}.
\end{align}
Here $r(x)=d(x,x_0)$ is the distance function from some fixed point $x_0 \in M$.
\end{lemma}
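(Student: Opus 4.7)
The plan is to reduce everything to a one-variable differential inequality along minimizing geodesics.  The starting point is that, after the normalization described just before the statement, the hypothesis $F\le 0$ reads
\[ \tfrac{1}{2}|\nabla f|^{2}\le \lambda f,\]
so that in particular $f\ge 0$ on $M$ (since $\lambda>0$) and
\[ |\nabla f|\le \sqrt{2\lambda\, f}.\]
This pointwise gradient bound immediately yields \eqref{grad estimate} once we prove \eqref{f estimate}, since then $|\nabla f|(x)\le \sqrt{2\lambda\,f(x)}\le \lambda\, r(x)+\sqrt{2\lambda f(x_{0})}$.

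So I focus on \eqref{f estimate}.  The idea is to consider $h(s):=\sqrt{f(\gamma(s))}$ along a minimizing unit-speed geodesic $\gamma\colon[0,r(x)]\to M$ from $x_{0}$ to $x$.  Where $h>0$, a direct computation gives
\[ |h'(s)|=\frac{|\langle\nabla f,\gamma'(s)\rangle|}{2\sqrt{f(\gamma(s))}}\le \frac{|\nabla f|(\gamma(s))}{2\sqrt{f(\gamma(s))}}\le \frac{\sqrt{2\lambda f}}{2\sqrt{f}}=\sqrt{\tfrac{\lambda}{2}}.\]
Hence $h$ is Lipschitz with constant $\sqrt{\lambda/2}$ on the set where $h>0$, and therefore everywhere along $\gamma$: on any interval where $h\equiv 0$ the estimate is trivial, while at a boundary point between $\{h=0\}$ and $\{h>0\}$ the function $h$ is continuous with one-sided Lipschitz bound, so the global Lipschitz bound persists.  (If one prefers, replace $f$ by $f+\varepsilon$, apply the above to $\sqrt{f+\varepsilon}$, and let $\varepsilon\downarrow 0$.)  Integrating from $0$ to $r(x)$ gives
\[ \sqrt{f(x)}=h(r(x))\le h(0)+\sqrt{\tfrac{\lambda}{2}}\,r(x)=\sqrt{f(x_{0})}+\sqrt{\tfrac{\lambda}{2}}\,r(x),\]
and squaring yields
\[ f(x)\le \tfrac{\lambda}{2}\left(r(x)+\sqrt{\tfrac{2f(x_{0})}{\lambda}}\right)^{2}=\tfrac{\lambda}{2}\left(r(x)+2\sqrt{\tfrac{f(x_{0})}{2\lambda}}\right)^{2},\]
which is \eqref{f estimate}.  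Plugging this back into $|\nabla f|\le\sqrt{2\lambda f}$ yields \eqref{grad estimate}.

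The only delicate point in this scheme is the differentiability of $h=\sqrt{f}$ at zeros of $f$, but as noted above this is handled either by the fact that $|\nabla f|^{2}\le 2\lambda f$ forces $\nabla f=0$ wherever $f=0$, or by the straightforward $\varepsilon$-regularization.  Everything else is a one-line ODE argument along a geodesic, so I expect no real obstacle.
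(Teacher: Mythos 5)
Your proof is correct and takes essentially the same route as the paper: the paper's proof consists of deriving the same Lipschitz bound $|\nabla\sqrt{f}|\le\sqrt{2\lambda}/2$ from $F\le 0$ and then citing Cao--Zhou for the integration along minimizing geodesics, which is precisely the step you write out (including the harmless care at zeros of $f$). The constants match, and the deduction of \eqref{grad estimate} from $|\nabla f|\le\sqrt{2\lambda f}$ together with \eqref{f estimate} is the same as in the paper.
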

\begin{proof}
Since $F\leq C$ we have that
$$0\leq|\nabla f|^2\leq 2\lambda f,$$
or
$$|\nabla\sqrt{f}|\leq \frac{\sqrt{2\lambda}}{2},$$
in particular  $f>0$. This is the only fact about $f$ used in the proof of  \cite{Cao-Zhou} to  obtain \eqref{f estimate}. 

Since $|\nabla f|^2\leq 2\lambda f$ we get \eqref{grad estimate} using \eqref{f estimate}. 
\end{proof}

Observe that inequality \eqref{f estimate} provides  the estimate of Theorem \ref{thm04}. The inequality \eqref{bound from below} follows from the following Proposition

\begin{proposition}\label{Prop03}
Let $(M,g,f)$ be a complete noncompact shrinking  gradient $q$-soliton such that $F\leq C$. If $q$ satisfies the inequality \eqref{Integral ineq}, then the potential function $f$ satisfies the estimate 
$$
f(x) \geq \frac{\lambda}{2}\left(r(x)-c_1\right)^2,$$
where $c_1$ is a positive constant depending only on $n$ and the tensor  $q$ on the unit ball $B_{x_0}(1)$.
\end{proposition}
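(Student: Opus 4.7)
The plan is to mimic the Cao--Zhou argument for shrinking gradient Ricci solitons, using the hypothesis \eqref{Integral ineq} as a substitute for the combination of Chen's theorem $R\geq 0$ and the second variation of energy. Throughout, the normalization $F\leq 0$ fixed earlier in the section gives $|\nabla f|^2\leq 2\lambda f$, hence $f\geq 0$ and the pointwise Lipschitz bound $|\nabla\sqrt f|\leq \sqrt{\lambda/2}$ on $M$. I would fix $x\in M$ with $\ell:=r(x)\geq 2$ and a unit-speed minimizing geodesic $\gamma:[0,\ell]\to M$ from $x_0$ to $x$, and abbreviate $\dot f(s)=\tfrac{d}{ds}f(\gamma(s))$. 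The gradient $q$-soliton equation produces the ODE
$$\ddot f(s)={\rm Hess}\,f(\dot\gamma,\dot\gamma)=\lambda+\tfrac12 q(\dot\gamma,\dot\gamma)=\lambda-H(s),\qquad H(s):=-\tfrac12 q(\dot\gamma,\dot\gamma).$$

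Next I would insert the Cao--Zhou piecewise linear cutoff $\phi$ (with $\phi(s)=s$ on $[0,1]$, $\phi\equiv 1$ on $[1,\ell-1]$, $\phi(s)=\ell-s$ on $[\ell-1,\ell]$, and $\phi(0)=\phi(\ell)=0$) into \eqref{Integral ineq}. Using $\int_0^\ell|\phi'|^2\,ds=2$ this gives $\int_0^\ell \phi^2 H\,ds\leq 2(n-1)$. Substituting $H=\lambda-\ddot f$ and using $\int_0^\ell\phi^2\,ds=\ell-\tfrac43$ yields $\int_0^\ell \phi^2\ddot f\,ds\geq \lambda\ell-\tfrac{4\lambda}{3}-2(n-1)$. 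Two applications of integration by parts (the boundary terms vanish since $\phi$ does) convert the left-hand side into
$$\int_0^\ell\phi^2\ddot f\,ds=-2\int_0^1 s\,\dot f\,ds+2\int_{\ell-1}^\ell (\ell-s)\,\dot f\,ds,$$
so absorbing $-2\int_0^1 s\,\dot f\,ds$, which is bounded by $2\max_{B_{x_0}(1)}|\nabla f|$, into a constant $C_0$ that depends only on $n$, $\lambda$, and $q$ on $B_{x_0}(1)$, I obtain the one-sided estimate
$$2\int_{\ell-1}^\ell(\ell-s)\,\dot f(s)\,ds\geq \lambda\ell-C_0.$$

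Since $(\ell-s)\in[0,1]$ and $\int_{\ell-1}^\ell(\ell-s)\,ds=\tfrac12$, the left-hand side is bounded above by $\sup_{[\ell-1,\ell]}\dot f$, so there is $s^{\ast}\in[\ell-1,\ell]$ with $\dot f(s^{\ast})\geq \lambda\ell-C_0$. For $\ell\geq C_0/\lambda$ the pointwise inequality $\dot f(s^{\ast})\leq |\nabla f(\gamma(s^{\ast}))|\leq\sqrt{2\lambda f(\gamma(s^{\ast}))}$ then yields
$$\sqrt{f(\gamma(s^{\ast}))}\geq\sqrt{\lambda/2}\,\bigl(\ell-C_0/\lambda\bigr).$$
Finally the Lipschitz bound $|\nabla\sqrt f|\leq\sqrt{\lambda/2}$, applied on the subarc of length $\leq 1$ joining $\gamma(s^{\ast})$ to $\gamma(\ell)=x$, transfers this to $x$ with a loss of at most $\sqrt{\lambda/2}$, giving $\sqrt{f(x)}\geq\sqrt{\lambda/2}\,(\ell-C_0/\lambda-1)$ and hence $f(x)\geq\tfrac\lambda2(r(x)-c_1)^2$ with $c_1=C_0/\lambda+1$. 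The statement is vacuous when $r(x)\leq c_1$ since $f\geq 0$.

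The main obstacle is that after integration by parts the nontrivial tail integral lives near $x$, far from $x_0$, where we have no direct control on $q$. The naive attempt to estimate this tail by re-integrating $\ddot f=\lambda-H$ would reintroduce $\int_{\ell-1}^\ell H\,ds$, for which the hypothesis provides no uniform bound. The Cao--Zhou trick, adapted here, is to \emph{keep} this tail as the source of the lower bound on $\dot f$, and then bridge from $s^{\ast}$ to $s=\ell$ using the pointwise inequality $|\nabla\sqrt f|\leq\sqrt{\lambda/2}$ rather than another integration of the ODE; this is precisely what confines the "bad" data contributing to $c_1$ to the ball $B_{x_0}(1)$.
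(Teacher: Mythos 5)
Your argument is correct and is essentially the same as the paper's (both are direct adaptations of the Cao--Zhou proof): you plug the same piecewise-linear cutoff into \eqref{Integral ineq}, trade $-\tfrac12 q(\dot\gamma,\dot\gamma)$ for $\lambda-\ddot f$ via the soliton equation, integrate by parts so that only the boundary-layer integrals of $\dot f$ survive, bound the layer near $x_0$ by data on $B_{x_0}(1)$, and convert the tail bound $2\int_{\ell-1}^{\ell}\phi\,\dot f\,ds\geq \lambda\ell-C_0$ into a lower bound on $\sqrt{f}$ using $|\nabla f|\leq\sqrt{2\lambda f}$ and $|\nabla\sqrt f|\leq\sqrt{\lambda/2}$. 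The only difference is organizational: the paper integrates the soliton equation over $[1,s_0-1]$ and handles the two end intervals as explicit correction terms, whereas you integrate $\phi^2\ddot f$ over the whole interval at once, but the resulting inequalities coincide.
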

\begin{proof}
Following the ideas of \cite{Cao-Zhou}, from Remark \ref{Rmk01} let us consider any minimizing normal geodesic $\gamma(s)$ as the same properties and define $\phi(s)$ by
\begin{equation*}
    \phi(s)= \left\lbrace\begin{matrix}
      s,  & s \in \left[0,1\right] ,     \\
      1,  & s\in \left[1,s_0-1\right],          \\
       s_0-s, &  s\in \left[s_0-1,s_0\right]  .        
    \end{matrix}\right.
\end{equation*}
Integrating the soliton equation along $\gamma$ from $s=1$ to $s=s_0-1$ and using the hypothesis \eqref{Integral ineq} we get the lower estimate \begin{align*}
f^{\prime}(\gamma(s_0-1))-f^{\prime}(\gamma(1)) &= \int_{1}^{s_0-1} \nabla_{X}f^{\prime}(\gamma(s))ds\\
 &=\lambda (s_{0}-2) + \int_{1}^{s_0-1} \phi^2(s)\left( \frac{1}{2}q(X,X) \right)ds\\
 &= \lambda (s_{0}-2) + \frac{1}{2}\int_{0}^{s_0} \phi^2(s)q(X,X)ds-\frac{1}{2}\int_0^1\phi^2(s)q(X,X)ds\\
 &- \frac{1}{2}\int_{s_0-1}^{s_0}\phi^2(s)q(X,X)ds\\
 &\geq \lambda(s_{0}-2) -(n-1)\int_{0}^{s_0} |\phi'(s)|^2ds-\max_{B_{x_0}(1)}\left|-\frac{1}{2}q\right|\\
 &- \frac{1}{2}\int_{s_0-1}^{s_0}\phi^2(s)q(X,X)ds\\
 &= \lambda(s_{0}-2) -2(n-1)-\max_{B_{x_0}(1)}\left|-\frac{1}{2}q\right|- \frac{1}{2}\int_{s_0-1}^{s_0}\phi^2(s)q(X,X)ds,\\
     &\\
\end{align*}
where in the second equality we used that
\begin{equation}\label{eq06}
    \nabla_Xf'=\nabla_X\nabla_X f=\lambda+\frac{1}{2}q(X,X).
\end{equation}
Using again \eqref{eq06} and integrating by parts the last term on the right of the above inequality to have
\begin{align*}
\int_{s_0-1}^{s_0} \phi^2(s)\left(-\frac{1}{2}q(X,X)\right)ds  &=  \lambda\int_{s_0-1}^{s_0}\phi^2(s)ds - \int_{s_0-1}^{s_0}\phi^2(s)\nabla_{X}f^{\prime}(\gamma(s))ds \\
&= \frac{\lambda}{3} -(s_0-s)^2f'(\gamma(s))|_{s_0-1}^{s_0} +2\int_{s_0-1}^{s_0}\phi(s)\nabla\phi(s)f^{\prime}(\gamma(s))ds.\\
&= \frac{\lambda}{3} + f^{\prime}(\gamma(s_0-1)) -2\int_{s_0-1}^{s_0}\phi(s)f^{\prime}(\gamma(s))ds.
\end{align*}
Thus, substituting this in the above expression we obtain
\begin{equation}\label{eq07}
2\int_{s_0-1}^{s_0}\phi(s)f^{\prime}(\gamma(s))ds \geq \lambda(s_{0}-2) -2(n-1) - \max_{B_{x_0}(1)}\left|-\frac{1}{2}q\right| + \frac{\lambda}{3} + f'(\gamma(1)).
\end{equation}

Next from Lemma \ref{lema estimates} we have seen that
$$|\nabla f|^2\leq 2\lambda f\,\,{\text{and}}\,\,|\nabla\sqrt{f}|\leq \frac{\sqrt{2\lambda}}{2}.$$
This implies that
$$|f'(\gamma(s))|\leq|\nabla f(\gamma(s))|\leq\sqrt{2\lambda f(\gamma(s))},$$
and 
$$|\sqrt{f(\gamma(s))}-\sqrt{f(\gamma(s_0))}|\leq\frac{\sqrt{2\lambda}}{2}(s_0-s)\leq\frac{\sqrt{2\lambda}}{2},$$
in the interval $[s_0-1,s_0]$. Therefore, in this interval
$$\max|f'(\gamma(s))|\leq\sqrt{2\lambda f(\gamma(s))}\leq\sqrt{2\lambda f(\gamma(s_0))}+\lambda.$$

Finally, observe that
$$2\int_{s_0-1}^{s_0}\phi(s)f^{\prime}(\gamma(s))ds\leq\max_{[s_0-1,s_0]}|f'(\gamma(s))|,$$
so that from equation \eqref{eq07}
$$\sqrt{2\lambda f(\gamma(s_0))}\geq \lambda(s_{0}-2) -2(n-1) - \max_{B_{x_0}(1)}\left|-\frac{1}{2}q\right| + \frac{2\lambda}{3} + f'(\gamma(1))=\lambda(s_0-c_1),$$
where $c_1$ depends only of $n$ and the tensor $q$ in the ball $B_{x_0}(1)$. That is,
$$\sqrt{2\lambda f(\gamma(s))}\geq \sqrt{2\lambda f(\gamma(s_0))}-\lambda\geq\lambda(s_0-c_1).$$
This conclude the proof.
\end{proof}

Now let us prove the volume growth estimate of geodesic balls of gradient $q$-solitons. Since $F\leq C$, from Lemma \ref{lema estimates}, we have seen that
$$0\leq|\nabla f|^2\leq 2\lambda f. $$
Define the following function
\begin{equation*}
    \eta^2=\frac{2}{\lambda}f.
\end{equation*}
 It's easy to see that
\begin{equation}\label{eq08}
\nabla \eta=\frac{1}{2}\sqrt{\frac{2}{\lambda}}\frac{\nabla f}{\sqrt{f}}  \ \ \text{and} \ \ |\nabla \eta|= \frac{1}{2}\sqrt{\frac{2}{\lambda}}\frac{|\nabla f|}{\sqrt{f}}\leq 1.
\end{equation}
Moreover, denote the following set and its volume by
$$\Omega(r)=\{x\in M; \eta(x)<r\}\,\,\,{\text{and}}\,\,\,V(r)=\int_{\Omega(r)}dV.$$
The co-area formula gives us
$$V(r)=\int_{0}^{r}ds\int_{\partial \Omega(s)}\frac{1}{|\nabla \eta|}dA,$$
and from \eqref{eq08}
\begin{equation}\label{eq09}
    V^{\prime}(r)=\int_{\partial \Omega(r)}\frac{1}{|\nabla \eta|} dA= \lambda r\int_{\partial \Omega(r)} \frac{1}{|\nabla f|}dA,
\end{equation}
since $\eta=r$ on $\partial\Omega(r)$.

To prove our volume growth estimate we first need the following key lemma. 
\begin{lemma}\label{lemma6} Let be $(M,g,f)$ a gradient shrinking $q$-soliton satisfying \eqref{equality assumption}. Then the following identity holds 
    \begin{equation*}
        nV(r)-rV^{\prime}(r)= -\frac{1}{2\lambda }\int_{\Omega(r)}\operatorname{tr}(q) dV + \frac{1}{2\lambda}\int_{\partial \Omega(r)} \frac{\operatorname{tr}(q)}{|\nabla f|}dV,
    \end{equation*}
\end{lemma}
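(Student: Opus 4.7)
The plan is to apply the divergence theorem to $\nabla f$ on the sublevel set $\Omega(r)$ and then use Hamilton's identity to rewrite the resulting boundary integral of $|\nabla f|$ so that $V'(r)$ emerges via \eqref{eq09}.

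Tracing the $q$-soliton equation \eqref{grad q soliton} gives $\Delta f=\lambda n+\tfrac12\operatorname{tr}(q)$, and since $\nabla f=\lambda\eta\,\nabla\eta$ is a positive multiple of $\nabla\eta$, the outward unit normal to $\partial\Omega(r)$ is $\nabla f/|\nabla f|$. Therefore
\begin{equation*}
\int_{\partial\Omega(r)}|\nabla f|\,dA \;=\; \int_{\Omega(r)}\Delta f\,dV \;=\; \lambda n V(r) + \frac{1}{2}\int_{\Omega(r)}\operatorname{tr}(q)\,dV.
\end{equation*}

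Next, after shifting $f$ by a constant so that the additive constant in Hamilton's identity \eqref{IH} vanishes (which is always possible and is the normalization implicit in the definition of $\eta$), one has $|\nabla f|^2=2\lambda f+\tfrac12\operatorname{tr}(q)$. On $\partial\Omega(r)$ the defining equation $\eta=r$ forces $f=\lambda r^2/2$, so
\begin{equation*}
|\nabla f|^2 \;=\; \lambda^2 r^2+\tfrac12\operatorname{tr}(q)\qquad\text{on }\partial\Omega(r).
\end{equation*}
Writing $|\nabla f|=|\nabla f|^2/|\nabla f|$, dividing by $|\nabla f|$, integrating over $\partial\Omega(r)$, and using \eqref{eq09} in the form $\lambda r\int_{\partial\Omega(r)}|\nabla f|^{-1}\,dA=V'(r)$ yields
\begin{equation*}
\int_{\partial\Omega(r)}|\nabla f|\,dA \;=\; \lambda r\,V'(r)+\frac{1}{2}\int_{\partial\Omega(r)}\frac{\operatorname{tr}(q)}{|\nabla f|}\,dA.
\end{equation*}
Equating the two expressions for $\int_{\partial\Omega(r)}|\nabla f|\,dA$, rearranging, and dividing through by $\lambda$ produces the claimed identity.

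The main technical obstacle is justifying the boundary calculations in terms of regularity of $\Omega(r)$. By Sard's theorem applied to the smooth function $\eta=\sqrt{2f/\lambda}$ on $\{\nabla f\neq 0\}$, almost every $r>0$ is a regular value of $\eta$, so $\partial\Omega(r)$ is a smooth hypersurface and both the divergence theorem and the coarea identity apply cleanly; the identity for general $r$ follows by continuity. The critical set $\{\nabla f=0\}=\{\eta=0\}$ lies in the interior of $\Omega(r)$ for every $r>0$ and so never interferes with the boundary integrals.
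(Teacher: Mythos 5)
Your proof is correct and follows essentially the same route as the paper's: integrate $\Delta f=\lambda n+\tfrac12\operatorname{tr}(q)$ over $\Omega(r)$, convert to a boundary integral of $|\nabla f|$, and rewrite it via the normalized Hamilton identity and \eqref{eq09} as $\lambda r V'(r)+\tfrac12\int_{\partial\Omega(r)}\operatorname{tr}(q)/|\nabla f|$. The Sard's theorem remark on the regularity of $\partial\Omega(r)$ is a worthwhile addition that the paper leaves implicit.
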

\begin{proof}
    From soliton equation we have 
    $$ \Delta f = \lambda n + \frac{1}{2} \operatorname{tr}(q).
$$
Integrating this identity on $\Omega(r)$ we obtain \begin{equation}\label{eq4.4}
    \int_{\Omega(r)} \Delta f dV= \lambda nV(r) + \frac{1}{2}\int_{\Omega(r)}\operatorname{tr}(q).
\end{equation}
By Hamilton's type identity \eqref{Hamilton identity}, we have using \eqref{eq08} that \begin{align*}
    \int_{\Omega(r)}\Delta f dV & = \int_{\partial \Omega(r)} \left\langle \nabla f, \frac{\nabla \eta}{|\nabla \eta|} \right\rangle d\sigma \\
    & = \int_{\partial \Omega(r)}|\nabla f| d\sigma \\
    & = \int_{\partial \Omega(r)}\frac{2\lambda f+\frac{1}{2}\operatorname{tr}(q)}{|\nabla f|}d\sigma \\
    & = 2\lambda \int_{\partial \Omega(r)}\frac{f}{|\nabla f|}d\sigma + \frac{1}{2}\int_{\partial \Omega(r)}\frac{\operatorname{tr}(q)}{|\nabla f|} d\sigma\\
    & = \lambda^2r^2\int_{\partial \Omega(r)}\frac{1}{|\nabla f|}d\sigma + \frac{1}{2}\int_{\partial \Omega(r)}\frac{\operatorname{tr}(q)}{|\nabla f|} d\sigma\\
    & = \lambda rV'(r)+\frac{1}{2}\int_{\partial \Omega(r)}\frac{\operatorname{tr}(q)}{|\nabla f|} d\sigma.
\end{align*}
This finishes the proof.
\end{proof}

\begin{remark}\label{remk03}
    Note that the above proof shows that 
    \begin{equation}
-\frac{1}{2}\int_{\Omega(r)}\operatorname{tr}(q)\leq \lambda nV(r).
    \end{equation}
    In fact, from Hamilton's type equality we have
    $$\lambda nV(r) + \frac{1}{2}\int_{\Omega(r)}\operatorname{tr}(q)=\int_{\Omega(r)}\Delta f=\int_{\partial\Omega(r)}|\nabla f|\geq0.$$
\end{remark}

Finally, we can prove Theorem \ref{thm06}.
\begin{proof}[Proof of Theorem \ref{thm06}]
    Consider the function
    $$G(r)=\int_{\Omega(r)}(-\lambda\operatorname{tr}(q))dV,$$
so, by the co-area formula

\begin{equation*}
     G(r)=\int_{0}^{r}ds\int_{\partial \Omega(s)}\frac{-\lambda\operatorname{tr}(q)}{|\nabla \eta|}d\sigma=\lambda\int_{0}^{r}sds\int_{\partial \Omega(s)}\frac{-\lambda\operatorname{tr}(q)}{|\nabla f|}d\sigma,
 \end{equation*}
 so
$$G'(r)=\lambda^2r\int_{\partial\Omega}\frac{-\operatorname{tr}(q)}{|\nabla f|}d\sigma.$$
From above Lemma we can write
\begin{equation}
nV(r)-rV^{\prime}(r)=\frac{1}{2\lambda^2}G(r) - \frac{1}{\lambda^3r}G'(r).
\end{equation}
Thus we obtain
\begin{align*}
     \left(r^{-n}V(r)\right)^{\prime}&= -nr^{-n-1}V(r)+r^{-n}V^{\prime}(r) \\
     &= -\frac{1}{r^{n+1}}(nV(r)-rV'(r))\\
     &= r^{-n-1}\left(-\frac{1}{2\lambda^2}G(r) + \frac{1}{2\lambda^3r}G'(r)\right) \\
     &= -\frac{1}{2\lambda^2}r^{-n-1}G(r) + \frac{1}{2\lambda^3}r^{-n-2}G^{\prime}(r).
 \end{align*}
 By integrating by parts from $r_0$ to $r$ we have from above equation
 \begin{align*}
    r^{-n}V(r)-r_{0}^{-n}V(r_0) &= -\frac{1}{2\lambda^2}\int_{r_0}^{r} r^{-n-1}G(r) + \frac{1}{2\lambda^3}\int_{r_0}^{r} r^{-n-2}G^{\prime}(r)\\
    &= -\frac{1}{2\lambda^2}\int_{r_0}^{r} r^{-n-1}G(r) +\frac{1}{2\lambda^3}r^{-n-2}G(r)\big|_{r_0}^r\\ & - \frac{1}{2\lambda^3}\int_{r_0}^r(-n-2)^{-n-3}G(r)dr \\
    &= \frac{1}{2\lambda^3}r^{-n-2}G(r)-\frac{1}{2\lambda^3}r_{0}^{-n-2}G(r_0) \\ & + \frac{1}{2\lambda^2}\int_{r_0}^{r}r^{-n-3}G(r)\left(\frac{1}{\lambda}(n+2)-r^2\right)dr.
\end{align*}
Now observe that, by definition,  $G$ is positive and increasing in $r$ (since $F\leq C\Leftrightarrow\operatorname{tr}(q)\leq 0$ and $G'(r)>0$), so that, for $r_0=\sqrt{\frac{1}{\lambda}(n+2)}\leq r$ we have $G(r)\geq G(r_0)$ and we obtain the following inequality
\begin{align*}
    \int_{r_0}^{r}r^{-n-3}G(r)\left(\frac{1}{\lambda}(n+2)-r^2\right)dr & \leq G(r_0) \int_{r_0}^{r}r^{-n-3}\left(\frac{1}{\lambda}(n+2)-r^2\right)dr \\
    & = G(r_0)\left( -\frac{1}{\lambda}r^{-n-2}+\frac{r^{-n}}{n}   \right)\bigg|_{r_0}^r \\
    & = G(r_0) \Biggl\{ \left( -\frac{1}{\lambda}r^{-n-2}+\frac{r^{-n}}{n} \right) \\ & - \left( -\frac{1}{\lambda}r_{0}^{-n-2} + \frac{r_{0}^{-n}}{n}  \right)  \Biggr\}. \\
\end{align*}
Substituting this in the above equation we get
$$r^{-n}V(r)-r_{0}^{-n}V(r_0)\leq \frac{1}{2\lambda^3}r^{-n-2}(G(r)-G(r_0)) + \frac{\frac{1}{2\lambda^2}G(r_0)}{n}(r^{-n}-r^{-n}_{0}).$$
Since $r^{-n}\leq r_0^{-n}$ for $r_0=\sqrt{\frac{1}{\lambda}(n+2)}\leq r$, we obtain that
$$r^{-n}V(r)-r_{0}^{-n}V(r_0)\leq \frac{1}{2\lambda^3}r^{-n-2}(G(r)-G(r_0)),$$
so that
$$V(r)\leq (r_{0}^{-n}V(r_0))r^n+\frac{1}{2\lambda^3}r^{-2}G(r).$$
The last term on the right of the above inequality should be from Remark \ref{remk03}
\begin{equation}\label{eq10}
\frac{1}{2\lambda^3}r^{-2}G(r)=-\frac{1}{2\lambda^2}r^{-2}\int_{\Omega(r)}\operatorname{tr}(q)dV\leq\frac{1}{\lambda}nr^{-2}V(r)\leq \frac{1}{2} V(r),
\end{equation}
since $\frac{1}{\lambda}nr^{-2}\leq\frac{n}{n+2}\leq\frac{1}{2}$,
for $r>>1$. Hence,
$$V(r)\leq 2(r_{0}^{-n}V(r_0))r^n.$$
Finally, since $F\leq C$, the definition of function $\eta$ and Theorem \ref{thm05} imply that
\begin{equation}
     \operatorname{Vol}\left( B_{x_0}(r)  \right) \leq V(r+c) \leq V(r_0)r^{n},
 \end{equation}
 for $r$ large enough. This finishes the proof.
\end{proof}

Finally, we still state the following volume estimate.

\begin{proposition}
    Let $(M,g,f)$ be a complete, noncompact  shrinking gradient $q$-soliton satisfying \eqref{equality assumption} and  inequality \eqref{Integral ineq}. Suppose that $F\leq C$ and $q$ satisfies the upper bound \begin{equation}\label{average}
        \frac{1}{V(r)}\int_{\Omega(r)}-\lambda\operatorname{tr}(q) dV \leq \delta,
    \end{equation}
    for some positive constant $\delta <2n\lambda^2$ and every $r$, sufficiently large. Then, there exists some positive constant $C_2>0$ such that \begin{equation*}
    \operatorname{Vol}\left(B_{x_0}(r)\right)\geq C_2 r^{n-\frac{1}{2\lambda^2}\delta},
\end{equation*}
for $r$ sufficiently large.
\end{proposition}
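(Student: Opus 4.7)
My plan is to exploit Lemma \ref{lemma6} together with the averaged bound \eqref{average} to derive a Gronwall-type differential inequality for $V(r)$, and then transfer the resulting lower bound on $V(r)$ to one on $\operatorname{Vol}(B_{x_0}(r))$ via the potential function growth supplied by Proposition \ref{Prop03}.

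Concretely, setting $G(r) := \int_{\Omega(r)}(-\lambda\operatorname{tr}(q))\,dV$ as in the proof of Theorem \ref{thm06}, Lemma \ref{lemma6} becomes
\[ nV(r) - rV'(r) = \frac{1}{2\lambda^2}G(r) - \frac{1}{2\lambda^3 r}G'(r). \]
The assumption $F \leq C$ forces $\operatorname{tr}(q) \leq 0$ under the normalization adopted in this section, so $G$ is non-negative and non-decreasing; in particular $G'(r) \geq 0$. Combining this with the hypothesis $G(r) \leq \delta V(r)$ yields
\[ nV(r) - rV'(r) \leq \frac{\delta}{2\lambda^2} V(r), \]
which I would rearrange as $(\log V)'(r) \geq \frac{1}{r}\bigl(n - \tfrac{\delta}{2\lambda^2}\bigr)$. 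Integrating from some large $r_0$ up to $r$ produces $V(r) \geq C' r^{n - \delta/(2\lambda^2)}$, where the exponent is positive thanks to the hypothesis $\delta < 2n\lambda^2$.

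To finish, I would apply Proposition \ref{Prop03}, whose hypotheses $F \leq C$ and \eqref{Integral ineq} match exactly what is assumed here, to obtain $f(x) \geq \frac{\lambda}{2}(r(x) - c_1)^2$ for $r(x)$ large. This forces the inclusion $\Omega(r) \subset B_{x_0}(r + c_1)$, since $x \in \Omega(r)$ means $f(x) < \lambda r^2/2$ and hence $r(x) < r + c_1$. Therefore
\[ \operatorname{Vol}(B_{x_0}(r + c_1)) \geq V(r) \geq C' r^{n - \delta/(2\lambda^2)}, \]
and a trivial relabelling of the radius yields the claimed bound $\operatorname{Vol}(B_{x_0}(r)) \geq C_2 r^{n - \delta/(2\lambda^2)}$ for $r$ sufficiently large. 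The main technical point to verify is that the term $-\frac{1}{2\lambda^3 r}G'(r)$ can indeed be discarded: it is non-positive precisely because $\operatorname{tr}(q) \leq 0$, so one must ensure the normalization of $f$ from the beginning of the section is in force throughout. Once that monotonicity of $G$ is established, the remaining steps are a routine ODE comparison and the standard passage from $V(r)$ to $\operatorname{Vol}(B_{x_0}(r))$.
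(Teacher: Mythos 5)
Your proposal is correct and follows essentially the same route as the paper: it combines Lemma \ref{lemma6} with the averaged bound \eqref{average} and the sign $\operatorname{tr}(q)\le 0$ to get $\bigl(n-\tfrac{\delta}{2\lambda^2}\bigr)V(r)\le rV'(r)$, integrates this logarithmically, and then uses Proposition \ref{Prop03} to pass from $V(r)$ to $\operatorname{Vol}(B_{x_0}(r))$. Your explicit verification that the discarded boundary term $-\tfrac{1}{2\lambda^3 r}G'(r)$ is non-positive is exactly the paper's observation that $\int_{\partial\Omega(r)}\operatorname{tr}(q)/|\nabla f|\le 0$, just phrased through $G$.
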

\begin{proof}
    Using \eqref{average} and that $F\leq C$ (i.e. $\operatorname{tr}(q)\leq 0$), we have from Lemma \ref{lemma6} that
    $$
    nV(r)-rV^{\prime}(r)= -\frac{1}{2\lambda}\int_{\Omega(r)}\operatorname{tr}(q) dV +\frac{1}{2\lambda} \int_{\partial \Omega(r)} \frac{\operatorname{tr}(q)}{|\nabla f|}dV\leq\frac{1}{2\lambda^2}\delta V(r),$$
    so that
\begin{equation}
    \left(n-\frac{1}{2\lambda^2}\delta\right)V(r)\leq rV'(r).
\end{equation}
Integrating from $1$ to $r$ we have
$$\ln\frac{V(r)}{V(1)}=\int_1^r(\ln(V(s)))'ds=\int_1^r\frac{V'(s)}{V(s)}ds\geq\int_1^r\frac{n-\frac{1}{2\lambda^2}\delta}{s}ds=\left(n-\frac{1}{2\lambda^2}\delta\right)\ln r.$$
Thus,
$$V(r)\geq V(1)r^{n-\frac{1}{2\lambda^2}\delta}.$$
Finally the definition of function $\eta$ and Proposition \ref{Prop03} imply that
$$\text{Vol}(B_{x_0}(r))\geq V(r-c)\geq 2^{-n}V(1)r^{n-\frac{1}{2\lambda^2}\delta}.$$
\end{proof}

We conclude this section with an application of Proposition \ref{Prop03} by proving the validity of the Omori-Yau maximum principle.

\begin{proof}[Proof of Theorem \ref{thm07}]
    Following the idea of \cite{GR}, we just need to use a sufficient condition for a Riemannian manifold to satisfy the Omori–Yau maximum principle \cite{PRS-Memoirs}. It tell us that in a Riemannian manifold 
$(M,g)$ if there exists a nonnegative $C^2(M)$ function $\psi$
such that
\begin{equation}\label{psi}
    \psi(x) \rightarrow +\infty \,\,\text{if} \ x\to \infty 
\end{equation}
\begin{equation}\label{A}
    \exists A >0\,\,\text{such that} \,\, |\nabla \psi|\leq A \psi^{\frac{1}{2}} \,\, \text{off a compact set, and }
\end{equation}
\begin{equation}\label{B}
    \exists B>0 \,\, \text{such that} \,\, \Delta \psi \leq B \psi^{\frac{1}{2}}G(\psi^{\frac{1}{2}})^{\frac{1}{2}}  \ \text{off a compact set,}
\end{equation}
where $G$ is a smooth function on $[0,+\infty)$ satisfying
\begin{align}\label{eq11}
    \begin{array}{cc}
      i) G(0)>0   & ii) G^{\prime}(t)\geq 0 \ \text{in} \ \left[0, +\infty\right) \\
         iii) G(t)^{-\frac{1}{2}}\notin L^{1}(+\infty) & iv) \limsup_{t\rightarrow +\infty }{\dfrac{tG(t^{\frac{1}{2}})}{G(t)}}<+\infty,
    \end{array}
\end{align}
then the Omori–Yau maximum principle holds on $M$.

Consider the function $G:\left[0, +\infty\right)\rightarrow \mathbb{R}^{+}$ given by $G(t)=t^{2}+1$. It is easy to see that $F$ satisfies the conditions \eqref{eq11}. Finally define  $\psi(x)=f(x)$ on $M$, where $f$ is the potential function. From Proposition \ref{Prop03} we have that  $\psi(x)\geq \frac{1}{4}\left(d(x)-c_1\right)^2$. Then,  $\psi(x)\rightarrow +\infty$ as $x\to\infty$, and $\eqref{psi}$ is satisfied. Also, since $F\leq C$ we saw for $\lambda=\frac{1}{2}$ that $$|\nabla f|^2\leq f,$$
that is, $|\nabla \psi|\leq \sqrt{\psi}$ on $M$, satisfying the condition \eqref{A}. Now, since $\operatorname{tr}(q)\leq0$ we have that \begin{equation*}
     \Delta f= \frac{n}{2} + \frac{1}{2}\operatorname{tr}(q) \leq \frac{n}{2} \leq \frac{1}{4}(d(x)-c_1)^2,
 \end{equation*}
for $d(x)$ large enough. Also, note that $\sqrt{f^2}\leq \sqrt{f}\sqrt{1+f}$ in $M-K$, where $K=\{f<1\}$. That is, 
\begin{equation*}
    \Delta f \leq \frac{1}{4}(d(x)-c_1)^2 \leq f(x) <\sqrt{f^2+1}=\sqrt{f}\sqrt{F(\sqrt{f})},
\end{equation*}
outside a compact set. Thus  \eqref{B} holds, and therefore the Omori–Yau maximum principle
holds for the Laplacian on $M$. 

Finally, to see its validity to the $f$-Laplacian we have proved above that if $u\in C^2(M)$ is a function with $\sup_Mu<+\infty$, there exists a  sequence of points $(x_k)$ on $M$ satisfying i) and iii) in \eqref{eqOY}. Using equation \eqref{Hamilton identity} we saw that 
$$|\nabla f|\leq\sqrt{f+1}=\sqrt{G(\sqrt{f})}.$$
Furthermore, inside of the proof of Theorem 1.9 of \cite{PRS-Memoirs} it is shown that 
$$|\nabla u(x_k)|\leq\frac{C}{k\sqrt{G(\sqrt{f(x_k)})}}.$$
Hence, from above equations we have
\begin{eqnarray*}
  \Delta_fu(x_k) &=& \Delta u(x_k)-\langle\nabla f(x_k),\nabla u(x_k)\rangle\\
  &\leq& \frac{1}{k}+|\nabla f(x_k)||\nabla u(x_k)|\\
  &\leq&\frac{1+C}{k}\to0,\,\,\text{as}\,\,k\to0.
\end{eqnarray*}
This completes the proof.
\end{proof}

\begin{remark}
{\em 
Following the terminology of \cite{PRS}, we say that the $L^1$-Liouville property for $\Delta_f$-superharmonic functions holds if every $Lip_{loc}$ solution of $\Delta_f\leq0$ satisfying $0\leq u\in L^1(M,e^{-f}dvol)$ must be constant. It is shown in \cite[Theorem 24]{PRS} that if the weak maximum principle at infinity (i) and iii) in \eqref{eqOY}) holds for $\Delta_f$ then the $L^1$-Liouville property for $\Delta_f$-superharmonic functions holds. In particular we also have obtained that for any complete gradient shrinking $q$-soliton satisfying \eqref{equality assumption} and \eqref{Integral ineq}, then the $L^1$-Liouville property for $\Delta_f$-superharmonic functions holds.}
\end{remark}

\section*{Statements and Declarations.} The authors state that there is no funding and/or conflicts of interests/competing interests.

\section*{Acknowledgment}
 The first author would like to thank Syracuse University-NY for its hospitality during his visit as a scholar where this work was done. The first author is partially supported by CNPq, Brazil, grant: 301896/2022-4 and 442033/2023-0.

\bibliographystyle{amsplain}

\end{document}